\newcommand{\R}{\mathbb R}
\newcommand{\I}{\mathbb I}
\newcommand{\ep}{\varepsilon}
\newcommand{\Emb}{\mathrm{Emb}}
\newcommand{\Imm}{\mathrm{Imm}}
\newcommand{\D}{\mathcal{D}}
\newcommand{\aaa}{\alpha\otimes\alpha\otimes\alpha}
\newcommand{\aaaa}{\alpha\otimes\alpha\otimes\alpha\otimes\alpha}
\newcommand{\ssss}{S^{d-1}\times S^{d-1}\times S^{d-1}\times S^{d-1}}
\newcommand{\M}{\mathcal{M}}
\newcommand{\Conf}{ev^*C}
\theoremstyle{plain}
\newtheorem{theorem}{Theorem}[section]
\newtheorem{proposition}[theorem]{Proposition}
\newtheorem{lemma}[theorem]{Lemma}
\theoremstyle{definition}
\newtheorem{definition}[theorem]{Definition}
\newtheorem{example}[theorem]{Example}
\theoremstyle{remark}
\begin{document}
\title{A geometric homology representative in the space of long knots}
\author{Kristine E. Pelatt}

\maketitle

\begin{abstract}
We produce explicit geometric representatives of nontrivial homology classes in $\Emb(\hat{S}^1,\R^d)$, the space of long knots, when $d$ is even.  We generalize results of Cattaneo, Cotta-Ramusino and Longoni to define cycles which live off of the vanishing line of a homology spectral sequence due to Sinha.  We use configuration space integrals to show our classes pair nontrivially with cohomology classes due to Longoni.  
\end{abstract}

\section{Introduction}

Knot spaces have recently been the subject of much interest.  Let $\Emb(\hat{S}^1,\R^d)$ be the space of embeddings from $S^1$ to $\R^d$ with fixed initial point and initial tangent vector, which is homotopy equivalent to the space of long knots.  Using Goodwillie-Weiss embedding calculus, Sinha \cite{Sinh02} defines spectral sequences converging to the homology and cohomology of $\Emb(\hat{S}^1,\R^d)$ for $d>3$.  Lambrechts, Turchin and Volic \cite{LTV06} have shown that the rational cohomology spectral sequence collapses at the $E_2$ page.  There is another spectral sequence, due to Vassiliev  \cite{Vass92}, which converges to the homology of $\Emb(S^1,\R^d)$.  The $E_1$ term of Vassiliev's spectral sequence agrees with the $E_2$ term of the embedding calculus spectral sequence by work of Turchin \cite{Tour07}.  These approaches allow one to combinatorially understand the ranks of the homology groups of $\Emb(S^1,\R^d)$, but do not immediately give geometric understanding or representing cycles and cocycles in knot spaces. We present representing cycles and cocycles defined through techniques which apply to all classes in the spectral sequence.

In \cite{CCL02} Cattaneo, Cotta-Ramusino and Longoni produce explicit, nontrivial, $k(d-3)-$dimensional cycles and cocycles.  We give a brief summary of these results in Section 3.  They define a chain map from a graph complex to the de Rham complex of $\Emb(S^1,\R^d)$, and produce cocycles as images of graph cocycles consisting of trivalent graphs.  To produce cycles, they use families of resolutions of singular knots with $k$ transverse double points.  These cycles all live along the $(-2q,q(d-1))-$diagonal in the first page of the homology spectral sequence, which also serves as a vanishing line. To establish nontriviality, they show the pairing between certain cycles and cocycles is nonzero.  For $d$ odd, Sakai produces a $(3d-8)-$dimensional cocycle in the space of long knots coming from a non-trivalent graph cocycle.  To establish the nontriviality of this cocycle, he evaluates it on a cycle produced using the Browder bracket coming from the action of the little two-cubes operad on the space of framed knots.  
 
The main result of this paper is the explicit production of a nontrivial cycle which lives off of the vanishing line of the homology spectral sequence for $d$ even, using techniques which should generalize.  We define this cycle by generalizing the methods of Cattaneo, Cotta-Ramusino and Longoni to families of resolutions of singular knots with triple points.  In particular, we first define a topological manifold $M_\beta$ and an embedding of $M_\beta$ into $\Emb(\hat{S}^1,\R^d)$, extending and correcting the results in a preprint of Longoni \cite{Long04}.  Longoni also defines a cocycle which is the image of a non-trivalent graph when $d$ is even.   We show that the pairing between Longoni's cocycle and our cycle is nonzero and thus both are nontrivial.   

Our cycle generalizes, and our techniques are closely related to the spectral sequence combinatorics, giving possible recipes for representatives of all cycles in the embedding calculus spectral sequence.   This is in contrast to Sakai's approach, which would require new input for any Browder-primitive classes off of the $(-2q,q(d-1))-$diagonal.  These results will appear in future work, but we discuss them briefly at the end of this paper. 

\section{Definition of the cycle}\label{sec:cycledef}

The idea at the heart of our method to produce homology classes in knot spaces goes back to Vassiliev's seminal work \cite{Vass92}.  In finite type knot theory, one defines the derivative of a knot invariant by taking an immersion with transverse double-points and evaluating the knot invariant on the resolutions of that immersion.  We require a generalization of such immersions.  

\begin{definition} 
An immersion $\gamma:S^{1} \hookrightarrow \R^{d}$ has a transverse intersection $r$-singularity at $\bar{t} = (t_1,t_2,\ldots,t_r)\in \I^{\times r}$ with $0< t_1<t_2<\cdots <t_r<1$, if all of the $\gamma(t_{i})$ coincide and the derivatives $\gamma'(t_{i})$ are generic in the sense that any $d$ or fewer of them are linearly independent. 
\end{definition}  

To connect with the language naturally produced by the embedding calculus spectral sequence, we use bracket expressions to encode singularity data.  Sinha calculates in \cite{Sinh02} that the 
subgroup of $\mathcal{P}ois^d(p)$, the $p-$th entry of the Poisson operad (see \cite{Sinh06.3}), generated by expressions with $q$ brackets such that each $x_i$ appears inside a bracket pair and the multiplication $``\cdot"$ does not appear inside a bracket pair, is also a subgroup of $E^1_{-p,q(d-1)}$ in the reduced homology spectral sequence.  This is the full $E^1_{-p,q(d-1)}$ in the spectral sequence converging to the homology of the space of embeddings modulo immersions.  On this subgroup, the differential $d_1: E^1_{-p,q(d-1)}\rightarrow E^1_{-p-1,q(d-1)}$ is $d^1=\sum_{i=0}^p (-1)^i(\delta^i)_*$, where $(\delta^0)_*$ is defined by adding $x_1$ in front of the expression and replacing each $x_j$ by $x_{j+1}$, $(\delta^{p+1})_*$ is defined by adding $x_{p+1}$ to the end, and for $1\leq i\leq p$, the map $(\delta^i)_*$  is defined by replacing $x_i$ by $x_i\cdot x_{i+1}$ and $x_j$ by $x_{j+1}$ for $j>i$.  In \cite{Tour07}, Tourtchine does further calculations in this spectral sequence. 

\begin{example} The bracket expression $\beta= \beta_1 +\beta_2$ where $\beta_1= \left[ [x_1,x_4],x_3\right] \cdot [x_2,x_5 ]$ and $ \beta_2= [x_1,x_4]\cdot \left[ [x_2,x_5],x_3\right]$ is a cycle in $E^1_{-5,3(d-1)}$.
\end{example}

\begin{definition} A pair $(\gamma, \bar{t})$ of an immersion and a sequence $\bar{t} = 0<t_1< t_2< \cdots < t_p<1$ respects a bracket expression $\beta \in \mathcal{P}ois^d(p)$ if $\gamma$ has a transverse $r$-singularity at the sequence $0<t_{i_1}< \ldots< t_{i_r}<1$ whenever $x_{i_1},\ldots,x_{i_r}$ appear inside of a bracket in $\beta$.  
\end{definition}  

For example, the knots $K_1$ and $K_2$ in Figure ~\ref{fig:K1K2} respect $\beta_1$ and $\beta_2$, respectively.  A knot can respect a bracket expression but have higher singularities; for example $K_1$ also respects $[x_1,x_3]\cdot [x_2,x_4]$.  
\begin{definition}We will denote the subspace of all pairs $(\gamma, \bar{t})\in \Imm(\hat{S}^1,\R^d)\times \I^{\times r}$ respecting a bracket expression by $\Imm_{\geq \beta}(\hat{S}^1,\R^d)$, with the convention $\Imm_\phi(\hat{S^1},\R^d) = \Imm(\hat{S^1},\R^d)$.  The subspace of $\Imm_{\geq \beta}(\hat{S}^1,\R^d)$ consisting of immersions which do not have higher singularities will be denoted by $\Imm_{=\beta} (\hat{S}^1,\R^d)$.  \end{definition}

\begin{figure}[h] 
   \centering
  $$ \includegraphics[width=3in]{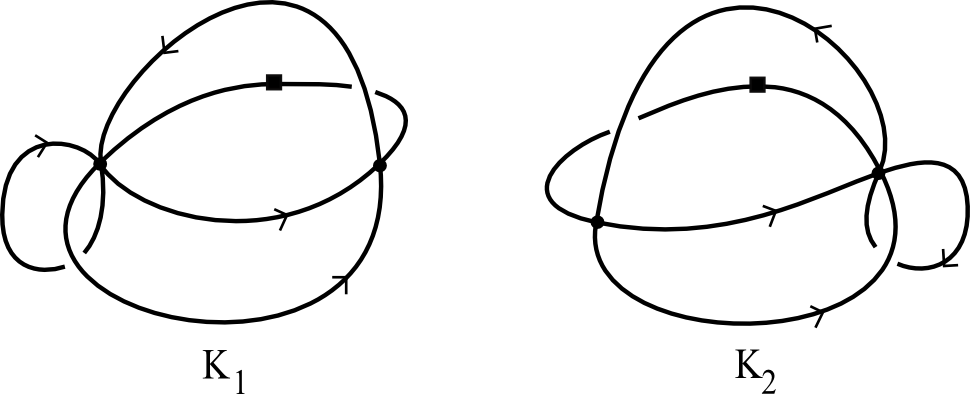} $$
   \caption{The singular knots $K_1$ and $K_2$.}
   \label{fig:K1K2}
\end{figure}

In the spectral sequence, bracket expressions of the form $\prod_{m=1}^k [x_{i_m},x_{j_m}] $ are $E^1$-cycles.  Submanifolds representing these cycles are well known and described in Section 2 of \cite{CCL02}.  Briefly, we start with a singular knot $K\subset \R^d$ with $k$ double points which respects $\prod_{m=1}^k [x_{i_m},x_{j_m}]$, and resolve each double point by moving one strand passing through the double point off of the other.  For each vector in $S^{d-3}$ we have a possible direction in which to move the strand, and therefore a possible way to resolve the double point.  The subset of $\Emb(\hat{S}^1,\R^d)$ consisting of all such resolutions of $K$ is a submanifold parameterized by $\prod_m S^{d-3}$, and its fundamental class corresponds to the cycle $\prod_{m=1}^k [x_{i_m},x_{j_m}] $ of the spectral sequence.  

For higher singularities, we start with ideas of Longoni \cite{Long04} and produce resolutions of  transverse intersection singularities by moving one strand at a time off the intersection point.    Assume the rank of the singularity $r$ is less than $d$, so the (tangent vectors of the) strands in question span a proper subspace.  There are two cases - resolving a double point and resolving a higher singularity.  If $r\geq 3$, we are moving a strand off the intersection point.  The complementary subspace to the (tangent vector of the) strand has a unit sphere $S^{d-2}$ which parametrizes the directions to move one strand off the intersection point.  If $r = 2$, we consider a unit sphere $S^{d-3}$ in the complimentary subspace which parametrizes the directions to move one strand off another.  

Resolutions of triple point singularities (and higher singularities) can produce further singularities (see Figure \ref{fig:K3to6}).  By restricting away from neighborhoods of those ``additional singularity'' resolutions, we produce submanifolds with boundary which we show can be pieced together to build representatives of $E^1$-cycles in the spectral sequence.  We formalize as follows.

\begin{definition}
If $\beta$ is a bracket expression, let $\beta(\hat{i})$ denote the bracket expression obtained from $\beta$ by removing $x_i$ and the minimal set of other symbols as required to have a bracket expression, and replacing $x_k$ by $x_{k-1}$ for all $k>i$.  
\end{definition}

For example, with $\beta_1= \left[ [x_1,x_4],x_3\right] \cdot [x_2,x_5 ]$, we have $\beta_1(\hat{4}) = [x_1,x_3][x_2,x_4]$.  For each strand through a transverse intersection $r$-singularity, we can define a resolution map which moves that strand off of the singularity.  To accommodate the two cases, we let \[d(r) = \left\{ \begin{array}{cc} d-2 & \mathrm{if}\; r> 2\\ d-3 &\mathrm{if}\; r= 2\end{array}\right..\]  

By the rank of $x_i$ in a bracket expression $\beta$, we will mean the number of variables in $\beta$ (counting $x_i$) which appear inside of common brackets with $x_i$.  In $\beta_1$, $x_3$ has rank three and $x_5$ has rank two. 

\begin{definition}
If $\beta$ is a bracket expression in which $x_i$ has rank $r$ (with $r > 0$)
define the resolution map \[\rho_{i} : \Imm_{\geq \beta}(\hat{S}^1,\R^d) \times S^{d(r)} \times \I \times \I
\to \Imm_{\geq \beta(\hat{i})}(\hat{S}^1,\R^d)\] by \[\rho_i(\gamma,\bar{t},v,a,\varepsilon)(t) = \left\{ \begin{array}{cc} \gamma(t) + a\cdot v  \exp\left( \frac{1}{(t-t_i)^2-\varepsilon^2}\right) & \mathrm{if}\; t\in(t_i-\varepsilon,t_i+\varepsilon)\\ \gamma(t) & \mathrm{otherwise} \end{array} \right. .\] \end{definition}

We call the triple $(v, a, \varepsilon) \in S^{d(r)} \times \I \times \I$ the resolution data.  We often fix $a$ and $\varepsilon$ so that the resolutions do not have unexpected singularities and by abuse denote the restriction by $\rho_{i}$ as well.  The resolution map produces immersions in which the strand (between times $t_i - \ep$ and $t_i + \ep$) is moved in the direction of $v$, as shown in Figure ~\ref{fig:resolution}.

\begin{figure}[h] 
   \centering
  $$ \includegraphics[width=2.2in]{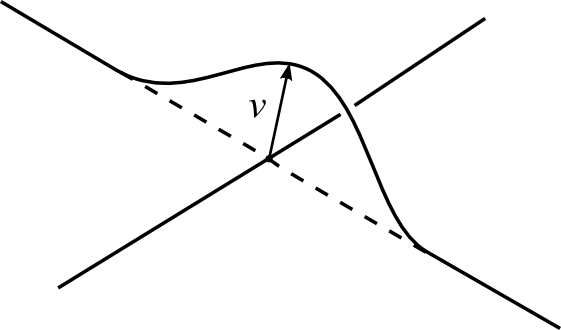} $$
   \caption{The resolution of a double point.}
   \label{fig:resolution}
\end{figure}

\begin{definition}\label{def:resmap}
Let $S = \{ x_{i_1},x_{i_2},\ldots,x_{i_k}\}$ be an ordered subset of the variables in $\beta$.  Define $\rho_{\beta, S}$ to be the composite 
$$\rho_{i_{k}} \circ (\rho_{i_{k-1}} \times id)  \circ \cdots \circ (\rho_{i_{1}} \times id)  : 
\Imm_{\geq \beta}(\hat{S}^1,\R^d) \times \prod_{m} \left(S^{d(r_{m}) }\times \I \times \I\right) \to \Imm_{\geq \varnothing}(\hat{S}^1,\R^d),$$
where $r_{m}$ is the rank of $x_{i_{m}}$ in $\beta(\hat{i_{1}}, \ldots, \hat{i}_{m-1})$.
\end{definition}

The set $S$ encodes which strands get moved in the resolution defined by $\rho_{\beta,S}$.  

We now specialize.  Let $\beta_{1} =  \left[ [x_1,x_4],x_3\right] \cdot [x_2,x_5 ]$, $ \beta_2= [x_1,x_4]\cdot \left[ [x_2,x_5],x_3\right] $ and choose the ordered subset of variables for each to be $S=\{x_3,x_4,x_5\}$.  We choose embeddings $K_1$ and $K_2$ of $S^1$ in $\R^3\hookrightarrow \R^d$ as shown in Figure ~\ref{fig:K1K2}, as well as a sequence $0<t_1<t_2<\cdots<t_5<1$ so that $(K_1,\bar{t})$ respects $\beta_1$ and $(K_2,\bar{t})$ respects $\beta_2$.

We restrict the directions in which the singularities are resolved to ensure we produce not just immersions but embeddings.  We assume that in the disk of radius $1/10$ centered at each singularity, both $K_1$ and $K_2$ consist of linear segments intersecting transversely, as shown in Figure ~\ref{fig:disks}.  Fix $\ep>0$ so that the intervals $[t_i-\ep,t_i+\ep]$, $i=1,2, \ldots,5$, are disjoint and $K_1([t_i-\ep,t_i+\ep])$ is contained in $B_{\frac{1}{10}}(K_1(s_i))$ for $i=1,2,\ldots,5$.   These intervals are the strands we will move to resolve the singularities.  

Let $w_1,\ldots, w_5$ be the unit tangent vectors to each line segment at the singular points of $K_1$.  Fix $\delta >0$ so that $\{v\in S^{d-2} :\;\parallel v-w_1\parallel<\delta\}$ and $\{v\in S^{d-2} :\;\parallel v-w_4\parallel<\delta\}$ are disjoint.  As mentioned above, we avoid moving the third strand off of the triple point in these directions to prevent the introduction of a double point.

\begin{figure}[h]
\begin{center}
$$\includegraphics[width=4in]{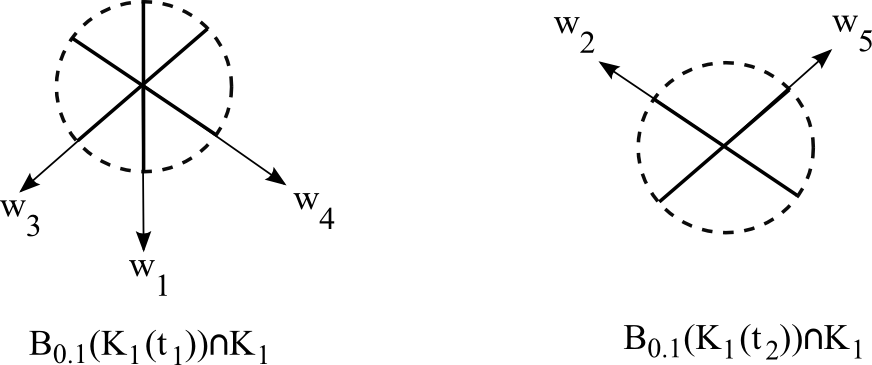}$$
\caption{$B_{\frac{1}{10}}(K_1(t_1))\cap K_1$ and $B_{\frac{1}{10}}(K_1(t_2))\cap K_1$.}
\label{fig:disks}
\end{center}
\end{figure}

We produce a manifold $\M_\beta$ as the image of a topological manifold $M_\beta$ embedded in $\Emb(\hat{S}^1,\R^d)$ by resolving singular knots with triple and double points.  The manifold $\M_\beta$ decomposes as the union $\bigcup_{i=1}^6 \M_i$, where each $\M_i$ is the image in $\Emb(\hat{S}^1,\R^d)$ of a resolution map defined below.  The domains of the resolution maps for the main pieces, $\M_1$ and $\M_2$, are denoted $M_1$ and $M_2$ and are homeomorphic to $\left( S^{d-2} \setminus \cup_4 B_\delta\right) \times S^{d-3}\times S^{d-3}$.   The domains of resolution maps defining the remaining four families are denoted $M_i\times \I$, where $M_i$ is homeomorphic to $S^{d-3}\times S^{d-3}\times S^{d-3}$ for $i=3,4,5,6$.  

\begin{definition}
For any triple $(\ep_3,\ep_4,\ep_5)$ with each $\ep_i\leq \ep$ for $\ep$ as above, define \[M_{1}(\ep_3,\ep_4,\ep_5) \subset  \Imm_{\geq \beta_{1}}(\hat{S}^1,\R^d) \times \prod_{k=3}^5 (S^{d(r_{k}) }\times \I \times \I)\] as the subspace of all
$K_{1} \times \prod (v_{i}, a_{i}, \varepsilon_i),$ where $a_3 = \frac{1}{10}$, $a_4=a_5 = \frac{\delta}{10}$,  and $v_3$ is such that the distances between $v_3$ and the vectors $\pm w_1$ and $\pm w_4$ are all greater than or equal to $\delta$.  There are no restrictions on $v_4,v_5\in S^{d-3}$.
\end{definition}

We will suppress the dependence of $M_1$ on the values of $\ep_3,\ep_4,\ep_5 \leq \ep$ as well as $\delta$ except when needed. 

\begin{lemma}
The restriction of $\rho_{\beta_1,S}$ to $M_{1}$ maps to $\Emb(\hat{S}^1,\R^d) \subset \Imm_{\geq \phi}(\hat{S}^1,\R^d)$.
\end{lemma}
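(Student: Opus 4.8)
The plan is to show that every curve in the image of $\rho_{\beta_1,S}$ restricted to $M_1$ is injective; as $\hat{S}^1$ is compact, an injective immersion is then an embedding. That the resolved curve is an immersion is immediate. Each factor $\rho_i$ alters the curve only on $(t_i-\ep_i,t_i+\ep_i)\subset[t_i-\ep,t_i+\ep]$, an interval on which $K_1$ is the linear segment through the relevant singular point with constant unit tangent $w_i$, and by construction the added direction $v_i$ is orthogonal to $w_i$: $v_3$ lies on the unit sphere $S^{d-2}$ of $w_3^\perp$, while $v_4$ and $v_5$ lie on the unit spheres $S^{d-3}$ of $\mathrm{span}(w_1,w_4)^\perp$ and $\mathrm{span}(w_2,w_5)^\perp$. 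Hence the modified velocity $w_i+a_i\psi_i'(t)v_i$, where $\psi_i(t)=\exp(1/((t-t_i)^2-\ep_i^2))$, has inner product $1$ with $w_i$ and never vanishes. For injectivity I would localize. The added displacement $a_i\psi_i(t)v_i$ has norm at most $a_i\exp(-1/\ep_i^2)$, which we may make arbitrarily small by shrinking $\ep$, so the resolved arcs near $t_1,t_3,t_4$ stay inside a small ball $B$ about the triple point $P$ and those near $t_2,t_5$ stay inside a small ball $B'$ about the double point $Q$; by our choice of $K_1$ these two balls are disjoint, and $K_1$ meets $B$ (resp. $B'$) only in the transverse linear segments pictured in Figure~\ref{fig:disks}. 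Off $B\cup B'$ the curve equals the embedding $K_1$, so any self-intersection must occur within $B$ among the arcs near $t_1,t_3,t_4$ or within $B'$ among the arcs near $t_2,t_5$.

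Inside $B'$ only the arc near $t_5$ is moved, by $a_5\psi_5(t)v_5$, while the arc near $t_2$ is fixed. In the linear model, an intersection of the displaced arc with the arc near $t_2$, or of the displaced arc with itself, would force a nonzero multiple of $v_5$ to lie in $\mathrm{span}(w_2,w_5)$, hence $\psi_5$ to vanish at some point of $(t_5-\ep_5,t_5+\ep_5)$, which is impossible. This is exactly the classical double-point resolution of \cite[Section~2]{CCL02}, which works for every $v_5\in S^{d-3}$ — the reason $M_1$ imposes no restriction on $v_4$ or $v_5$ — and the same mechanism will handle the arc near $t_4$ below.

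Inside $B$ the argument has two steps. First, the arc near $t_3$ is pushed off the triple point by $a_3\psi_3(t)v_3$ with $a_3=\tfrac1{10}$; because the segment near $t_3$ is orthogonal to those near $t_1$ and $t_4$ in the configuration of Figure~\ref{fig:disks} (so that $w_1,w_4$ actually lie on $S^{d-2}$), a computation in the linear model shows that the displaced arc meets the arc near $t_1$, respectively near $t_4$, exactly when $v_3=\pm w_1$, respectively $v_3=\pm w_4$. The defining condition of $M_1$, that $v_3$ lie at distance at least $\delta$ from each of $\pm w_1,\pm w_4$, rules these directions out, so after the first push the untouched arcs near $t_1$ and $t_4$ still meet only at $P$ and the arc near $t_3$ is disjoint from both; moreover, as $v_3$ ranges over the compact set $S^{d-2}\setminus\cup_4 B_\delta$ this disjointness is uniform, giving a separation $\eta>0$. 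Second, the arc near $t_4$ is pushed by $a_4\psi_4(t)v_4$ with $a_4=\delta/10$; by the double-point argument applied to the residual crossing of the arcs near $t_1$ and $t_4$ it meets neither the arc near $t_1$ nor itself, and its displacement, of norm at most $(\delta/10)\exp(-1/\ep_4^2)$, can be kept smaller than $\eta$, so it cannot reach the already-displaced arc near $t_3$. Assembling the two balls, the resolved curve is injective, hence an embedding. The step I expect to be most delicate is this reconciliation of scales: $\delta$ must be small enough for the $\delta$-disks about $\pm w_1,\pm w_4$ to be disjoint and for the secondary push of amplitude $\delta/10$ to stay clear of the primary push of amplitude $\tfrac1{10}$, yet large enough to contain the bad set of directions for the arc near $t_3$, while $\ep$ (hence the $\ep_i$) must be chosen small enough to make the norm estimates go through. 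Getting this bookkeeping right, and in particular the disparity between $a_3$ and $a_4=a_5$, is precisely where the present construction must be set up with more care than in \cite{Long04}.
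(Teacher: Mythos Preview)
The paper states this lemma without proof; after the lemma the text moves directly to the analogous definition of $M_2$. So you are supplying an argument the paper omits, and there is nothing in the paper to compare your approach against.

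On its own merits your sketch is essentially correct and contains the right ideas. The localization to the two balls, the immersion check via $v_i\perp w_i$, and the double-point resolution arguments for arc~$5$ versus arc~$2$ and arc~$4$ versus arc~$1$ are all standard and go through as you describe. Your linear-model computation that the displaced arc~$3$ can meet arc~$1$ (respectively arc~$4$) only when $v_3=\pm w_1$ (respectively $\pm w_4$) is correct under the hypothesis $w_1,w_4\perp w_3$; this hypothesis is implicit in the paper's setup, since the definition of $M_1$ only makes sense as written if $\pm w_1,\pm w_4$ actually lie on the sphere $S^{d-2}\subset w_3^\perp$, and one should simply arrange this when choosing $K_1$.

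The one place your argument is not quite closed is precisely the one you flag. A direct computation shows the separation $\eta$ between the displaced arc~$3$ and the undisplaced arc~$4$ is of order $\tfrac{\delta}{10}e^{-1/\ep_3^2}$ (attained near $t=t_3$ when $v_3$ sits on the boundary of one of the $\delta$-disks about $\pm w_4$), while the secondary displacement of arc~$4$ is bounded by $\tfrac{\delta}{10}e^{-1/\ep_4^2}$. Thus your triangle-inequality step goes through provided $\ep_4$ is taken a bit smaller than $\ep_3$; since the paper defines $M_1(\ep_3,\ep_4,\ep_5)$ for \emph{any} triple with $\ep_i\le\ep$, this is a legitimate choice, and your final paragraph correctly locates this reconciliation of scales as the point requiring care. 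If you want an argument insensitive to the relation between $\ep_3$ and $\ep_4$, project the intersection equation onto $w_1$ and onto $\mathrm{span}(w_1,w_3,w_4)^\perp$: the first forces $\langle v_3,w_1\rangle=0$, and the second forces $v_3'$ and $v_4'$ to be parallel with a prescribed ratio of $\psi$-values, which together with the remaining components over-constrains $(t,s)$.
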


Choose the immersion $K_2$ as shown in Figure \ref{fig:K1K2}, and assume that the constants $\delta >0$ and $\varepsilon>0$ chosen above satisfy similar conditions for $K_2$,  to define $M_2$ analogously. The restriction of $\rho_{\beta_2,S}$ maps $M_2$ to  $\Emb(\hat{S}^1,\R^d) \subset \Imm_{\geq \phi}(\hat{S}^1,\R^d)$.  We denote the families of embeddings $\rho_{\beta_1,S}(M_1)$ and $\rho_{\beta_2,S}(M_2)$ by $\M_1$ and $\M_2$ respectively, and connect the boundary components of $\M_1$ to those of $\M_2$ to build a family without boundary.  

Each boundary component can also be described as the family of knots obtained by resolving a singular knot with three double points.  In fact, resolving the triple point in $K_1$ by moving the strand $K_1\left([t_3-\varepsilon_3, t_3+\varepsilon_3]\right)$ in the direction of $\pm w_1$ or $\pm w_4$ yields an immersion with three double points.  The four boundary components of $\M_1$ are families of resolutions of these four knots.

\begin{definition}  Let $K_3, K_4, K_5$ and $K_6$ be the singular knots, each with three double points, defined below and shown in Figure \ref{fig:K3to6}.   
\begin{align*} K_3 & = \rho_{3}\left(K_1, w_4, \tfrac{1}{10}, \ep_3 \right) \\
K_4 & = \rho_{3}\left(K_1, -w_4, \tfrac{1}{10}, \ep_3\right)\\
K_5 & = \rho_{3}\left(K_1, w_1,\tfrac{1}{10}, \ep_3\right) \\
K_6 & = \rho_{3}\left(K_1,-w_1, \tfrac{1}{10}, \ep_3 \right)  \end{align*} \end{definition}

\begin{figure}[h]
\begin{center}
$$\includegraphics[width=3in]{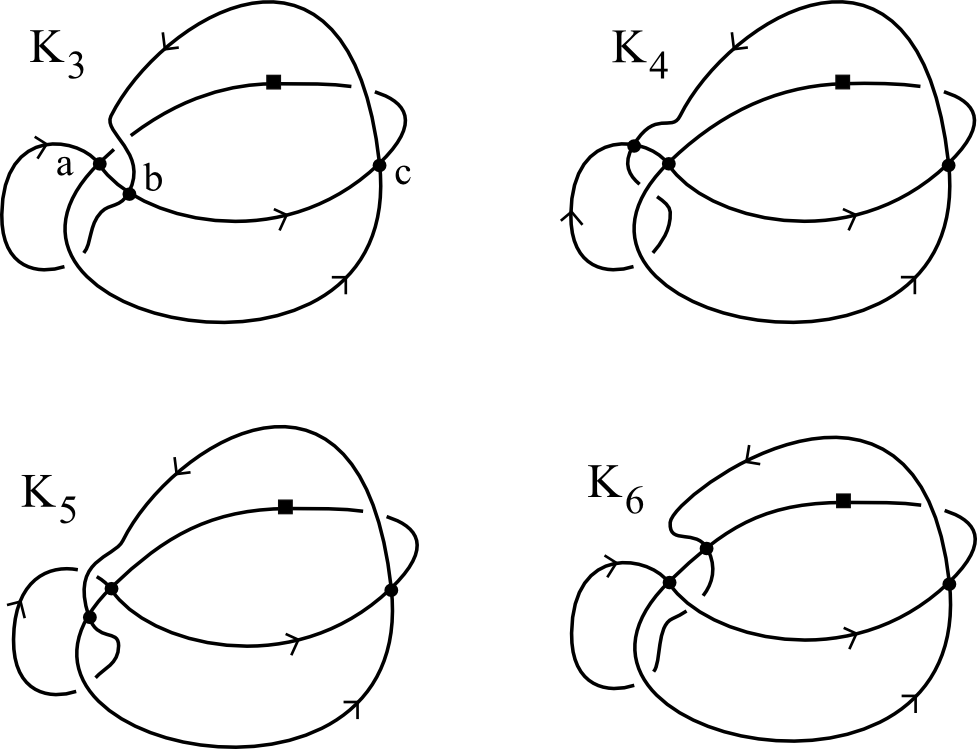}$$
\caption{Singular knots $K_3$, $K_4$, $K_5$, and $K_6$.}
\label{fig:K3to6}
\end{center}
\end{figure}

We resolve these knots, restricting the directions so the resulting embeddings are those in the boundary components of $\M_1$.  Initially, we focus on $K_3$.  The double points corresponding to $[x_1,x_4]$ and $[x_2,x_6]$, labeled $a$ and $c$, are resolved in the same way as the double points in $K_1$.  The double point corresponding to $[x_3,x_5]$, labeled $b$, is resolved using only vectors in the direction $v-w_4$ for some $v$ such that $\parallel v - w_4 \parallel = \delta$.  This guarantees that resolving this double point in $K_3$ yields the $\parallel v_3 - w_4\parallel = \delta$ boundary component of $\M_1$.  

\begin{definition}
Define $M_3(\ep_3,\ep_4,\ep_5) \subset \Imm_{\geq \beta_3}(\hat{S}^1,\R^d) \times \prod_{i=3,4,6}\left( S^{d-3}\times \I\times \I\right)$ where $\beta_3=[x_1,x_4]\cdot[x_2,x_6]\cdot [x_3,x_5]$ as the subset of all $K_3\times \prod_{i = 3,4,6}\left(u_i,\frac{\delta}{10},\varepsilon_i\right)$ where $u_4$ and $u_6$ are unrestricted and $u_3$ satisfies $\parallel w_4 +\delta u_3\parallel = 1$. 
\end{definition} 

\begin{proposition}\label{prop:resolboundary}
Let  $S_3 = \{x_3,x_4,x_6\}$. The restriction of $\rho_{\beta_3, S_3}$ maps $M_{3}$ to $\Emb(\hat{S}^1,\R^d) \subset \Imm_{\geq \phi}(\hat{S}^1,\R^d)$, and $\rho_{\beta_3,S_3}(M_3)$ is the $\parallel v_3 - w_4\parallel = \delta$ boundary component of $\M_1$.
\end{proposition}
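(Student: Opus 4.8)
The statement has two parts: that $\rho_{\beta_3,S_3}$ restricted to $M_3$ lands in $\Emb(\hat{S}^1,\R^d)$, and that its image is exactly the $\|v_3-w_4\|=\delta$ boundary component of $\M_1$. The plan is to prove the second part and deduce the first from it: the lemma above, that $\rho_{\beta_1,S}$ maps $M_1$ into $\Emb(\hat{S}^1,\R^d)$, applies to all of the compact manifold-with-boundary $M_1$, hence in particular to each of its four boundary components, so once $\rho_{\beta_3,S_3}(M_3)$ is identified with the image of one such boundary component it is automatically a family of embeddings.

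Write $\partial_{w_4}M_1\subset M_1$ for the boundary component on which $\|v_3-w_4\|=\delta$, so that under the homeomorphism $M_1\cong(S^{d-2}\setminus\cup_4 B_\delta)\times S^{d-3}\times S^{d-3}$ it becomes $\{v_3\in S^{d-2}:\|v_3-w_4\|=\delta\}\times S^{d-3}\times S^{d-3}$. I would first build an explicit homeomorphism $\Phi\colon\partial_{w_4}M_1\to M_3$: on the knot coordinate it sends the constant immersion $K_1$ to the constant immersion $K_3=\rho_3(K_1,w_4,\tfrac1{10},\ep_3)$, and on resolution data it sends $(v_3,v_4,v_5)$ to $(u_3,u_4,u_6)$ with $u_3:=(v_3-w_4)/\delta$, $u_4:=v_4$, $u_6:=v_5$, also matching the auxiliary constants $a_i,\ep_i$. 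The defining constraint of $M_3$ is exactly what makes $\Phi$ land in $M_3$: $u_3$ is a unit vector with $\|w_4+\delta u_3\|=\|v_3\|=1$, and conversely $\|w_4+\delta u_3\|=1$ puts $v_3:=w_4+\delta u_3$ on the sphere $\|v_3-w_4\|=\delta$; thus $\Phi$ is a continuous bijection between compact spaces, hence a homeomorphism. The heart of the proof is the identity $\rho_{\beta_1,S}=\rho_{\beta_3,S_3}\circ\Phi$ on $\partial_{w_4}M_1$. Since the three component resolutions in $\rho_{\beta_1,S}$ and those in $\rho_{\beta_3,S_3}$ are supported in pairwise disjoint intervals around $t_3,t_4,t_5$ (resp.\ $t_3,t_4,t_6$), it suffices to compare the displacement performed on each strand separately; the displacements of the two strands other than the $x_3$-strand agree literally under $\Phi$, so everything reduces to the $x_3$-strand. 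There the single displacement $\tfrac1{10}v_3\exp\left(\tfrac{1}{(t-t_3)^2-\ep_3^2}\right)$ splits, via $v_3=w_4+\delta u_3$, as $\tfrac1{10}w_4$ times that exponential plus $\tfrac\delta{10}u_3$ times the same exponential; the first summand is precisely the displacement carrying $K_1$ to $K_3$, and the second is precisely the $\rho_3$-resolution of the double point $b$ of $K_3$ with amount $\tfrac\delta{10}$ and direction $u_3$. Hence $\rho_{\beta_1,S}(K_1,v_3,v_4,v_5)=\rho_{\beta_3,S_3}(K_3,u_3,u_4,u_6)$ for every point of $\partial_{w_4}M_1$, so $\rho_{\beta_3,S_3}(M_3)=\rho_{\beta_1,S}(\partial_{w_4}M_1)$, which by the construction of $\M_1$ is its $\|v_3-w_4\|=\delta$ boundary component; and it lies in $\Emb(\hat{S}^1,\R^d)$ as noted above.

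The main obstacle I anticipate is the bookkeeping that upgrades ``the two families of knots coincide'' to the on-the-nose identity $\rho_{\beta_1,S}=\rho_{\beta_3,S_3}\circ\Phi$. One must track the relabelling of marked points across $\beta_1$, $\beta_1(\hat3)$ and $\beta_3$ --- the five marked times of $K_1$ becoming the six of $K_3$ by inserting a new time on the $x_4$-strand just past $t_4$, so that the old $x_2,x_5$ become $x_2,x_6$ and the inserted point plays the role of $x_5$ in $\beta_3$ --- check that the auxiliary constants $a_i$ and $\ep_i$ on the two sides are chosen consistently so that the composite double-point resolution of $K_3$ really is a single resolution of $K_1$ (in particular the two successive pushes of the $x_3$-strand must use the same bump function, i.e.\ the same $\ep_3$), and confirm that the sphere of resolution directions used for $b$ is the one for which $u_3=(v_3-w_4)/\delta$ is an honest identification rather than merely a parametrization. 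The underlying transversality input --- that pushing the $x_3$-strand of $K_1$ in the direction $w_4$ creates exactly one new transverse double point $b$, lying on the $x_4$-strand near the triple point, and that pushing it further by a short $u_3$ with $\|w_4+\delta u_3\|=1$ separates it from the $x_4$-strand without producing an intersection with the $x_1$-strand --- is of the same flavor as what was established for $M_1$, so it should require no genuinely new ideas.
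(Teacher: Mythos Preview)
Your proposal is correct and follows essentially the same approach as the paper: the paper's proof consists solely of the splitting identity $\tfrac{1}{10}w_4\exp(\cdots)+\tfrac{\delta}{10}u_3\exp(\cdots)=\tfrac{1}{10}v_3\exp(\cdots)$ with $v_3=w_4+\delta u_3$, which is exactly your ``heart of the proof'' computation on the $x_3$-strand. Your version is in fact more careful, making explicit the homeomorphism $\Phi$ and the relabelling bookkeeping that the paper leaves implicit.
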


\begin{proof}
The resolution $\rho_{\beta_3,S_3}(K_3) = \rho_{\beta_3}\left( \rho_{3}\left( K_1,w_4,\frac{1}{10},\ep_3\right) \right)$ using $u_3$ as in the definition of $M_3(\ep_3,\ep_4,\ep_5)$ is the same embedding as the resolution $\rho_{\beta_1} (K_1)$ using $v_3 = w_4+\delta u_3$, since \[\frac{1}{10} w_4\,\exp \left(\frac{1}{(t-t_3)^2+\ep_3^2}\right) + \frac{\delta}{10}u_3  \exp\left(\frac{1}{(t-t_3)^2+\ep_3^2}\right) = \frac{1}{10} v_3 \exp \left(\frac{1}{(t-t_3)^2+\ep_3^2}\right).\] \end{proof}

Similarly resolving the knots $K_4, K_5$, and $K_6$ yields the boundary components of $\M_1$ corresponding to $\parallel v_3 + w_4\parallel = \delta$, $\parallel v_3 - w_1\parallel = \delta$, and $\parallel v_3 + w_1\parallel = \delta$ respectively.  This process can also be applied to the boundary components of $\M_2$.  Let $K_7, K_8, K_9,$ and $K_{10}$ be the four singular knots obtained from $K_2$ by moving $K_2\left([t_3-\ep_3,t_3+\ep_3]\right)$ in the direction of the tangent vectors to the other two strands intersecting at the triple point, as shown in Figure \ref{fig:k7to10}.  As with $K_1$, resolving these singular knots gives the four boundary components of $\M_2$.

\begin{figure}[h]
\begin{center}
$$\includegraphics[width=3in]{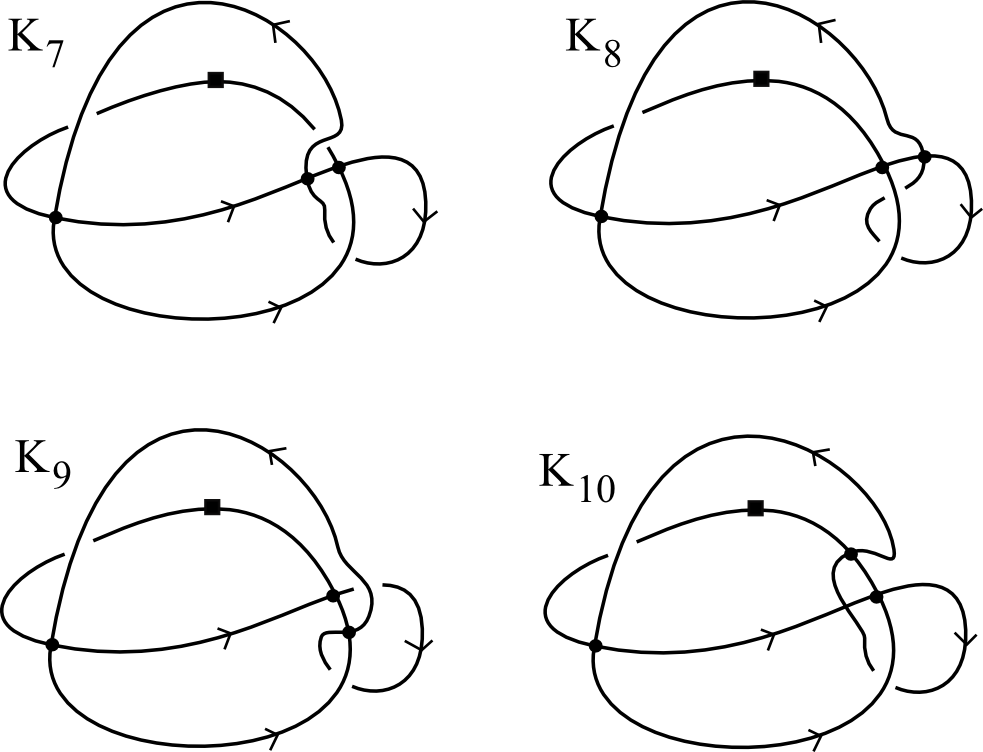}$$
\caption{Singular knots $K_7$, $K_8$, $K_9$, and $K_{10}$.}
\label{fig:k7to10}
\end{center}
\end{figure}

Since each of the four knots $K_3,\ldots,K_6$ has the same singularity data as one of $K_7,\ldots,K_{10}$, we have four pairs of knots which are isotopic in $\Imm_{=\beta_i}(\hat{S}^1,\R^4)$, and thus in $\Imm_{=\beta_i}(\hat{S}^1,\R^d)$ with $d\geq 4$, where $\beta_3,\ldots,\beta_6$ each encodes singularity data for a knot with exactly three double points.  If $d>4$, we require that the isotopy be through knots in $\R^4\subset \R^d$ (with the standard embedding).  If $d = 4$, we restrict the steps of the isotopy, as described in the Appendix, to simplify evaluation of Longoni cocycle on the cycle.   Resolving each singular knot in these four isotopies yields four families, denoted $\M_3, \M_4,\M_5,$ and  $\M_6$, parametrized by $S^{d-3}\times S^{d-3}\times S^{d-3}\times\I$.  Specifically, if $h_i:\I \rightarrow  \Imm_{\geq \beta}(\hat{S}^1,\R^d)$ is an isotopy, then these $\M_i$ are be the images of the composites
\begin{center} \begin{multline}\begin{CD} M_i\times \I = S^{d-3}\times S^{d-3}\times S^{d-3}\times\I  @>{\mathrm{Id}\times h_i}>> \\ S^{d-3}\times S^{d-3}\times S^{d-3}\times\Imm_{= \beta_i}({S}^1,\R^d) @>{\rho_{\beta_i,S_i}}>> \Emb(\hat{S}^1,\R^d). \end{CD}\end{multline}  \end{center}
For $i=3,4,5,6$, the boundary of $\M_i$ is the disjoint union of a boundary component of $\M_1$ and a boundary component of $\M_2$, providing a way to glue the boundary of $\M_1$ to the boundary of $\M_2$.  

The union of these six $(3d-8)$-dimensional families in $\Emb(\hat{S}^1,\R^d)$ gives a single family without boundary.  Let  \[M_\beta = \left(M_1\sqcup M_2 \sqcup\left( \sqcup_{i=3}^6 M_i\times \I\right) \right)/ \sim\] where each boundary component of $M_3,\ldots, M_6$ is identified with a boundary component of $M_1$ or $M_2$ so as to be compatible with Proposition ~\ref{prop:resolboundary}.  Let $\M_\beta$ be the image of the orientable topological manifold $M_\beta$ under the resolution map defined above.  For $d=4$, the resolution map takes $M_\beta$ to $\Emb(S^1,\R^d)$, as the isotopies we have chosen do not respect the fixed basepoint. 

\begin{theorem} If $d>4$ is even then the fundamental class of $\M_\beta$ is a non-trivial homology class in $\Emb(\hat{S}^1,\R^d)$ for any choice of isotopies $h_i$ through $\Imm_{=\beta_i}(\hat{S}^1,\R^d)$.  For $d=4$ the fundamental class of $\M_\beta$ is a non-trivial homology class in $\Emb(S^1,\R^d)$ if the isotopies $h_i$ satisfy a sequence of specified steps.  
\end{theorem}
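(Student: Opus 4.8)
The plan is to prove nontriviality of $[\M_\beta]$ by pairing it against a cohomology class, exactly the class promised in the introduction: Longoni's $(3d-8)$-dimensional cocycle $\omega$, obtained by applying the Cattaneo--Cotta-Ramusino--Longoni chain map from the graph complex to the de Rham complex of $\Emb(\hat S^1,\R^d)$ to the specific non-trivalent graph cocycle that exists when $d$ is even \cite{CCL02, Long04}. First I would recall that $\omega$ is a closed form of degree $3d-8$ realized as a Bott--Taubes configuration space integral: a sum over graphs $\Gamma$ of fibrewise integrals, over the configuration-space bundle $C_\Gamma \to \Emb(\hat S^1,\R^d)$, of wedges $\bigwedge_{e} \phi_e^*\,\mathrm{vol}_{S^{d-1}}$ of pullbacks of the unit volume form along the Gauss maps $\phi_e$ attached to the edges. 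Since $\M_\beta$ is by construction a closed orientable $(3d-8)$-manifold (Section~\ref{sec:cycledef}), the pairing $\langle \omega, [\M_\beta]\rangle = \int_{\M_\beta}\omega$ is a well-defined number, and the theorem reduces to showing this number is nonzero.

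Next I would use the decomposition $\M_\beta = \bigcup_{i=1}^{6}\M_i$ to write $\int_{\M_\beta}\omega = \int_{\M_1}\omega + \int_{\M_2}\omega + \sum_{i=3}^{6}\int_{\M_i}\omega$ and dispose of the correction pieces $\M_3,\dots,\M_6$. These are families of three-double-point resolutions built from isotopies $h_i$; for $d>4$ the isotopies are confined to $\R^4\subset\R^d$, and I would argue that Longoni's graph — whose internal structure records a genuine triple point rather than three independent chords — pulls back on such a family to a form of insufficient rank (the relevant Gauss maps cannot sweep out a top-dimensional region of $S^{d-1}$), so that $\int_{\M_i}\omega=0$ for $i=3,\dots,6$. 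For $d=4$ this dimension argument is unavailable; here one instead uses the staged isotopies of the Appendix, which are chosen precisely so that the analogous contributions either vanish or cancel in pairs, leaving $\int_{\M_\beta}\omega = \int_{\M_1}\omega + \int_{\M_2}\omega$.

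It then remains to evaluate the two principal pieces. On $M_1 = \bigl(S^{d-2}\setminus\cup_4 B_\delta\bigr)\times S^{d-3}\times S^{d-3}$ the family is a $C^0$-small perturbation of the fixed singular knot $K_1$, so the Gauss maps of all edges of $\Gamma$ are, up to lower-order terms, locally constant except along the edges recording the three resolution directions. Following the usual Bott--Taubes analysis of the boundary faces of $C_\Gamma$, I would show that the only face supporting a nonzero contribution is the principal one, where the graph vertices collide at the triple point and the two double points of $K_1$; on that face the fibre integral factors as a product $\int_{S^{d-2}}\!\cdots \int_{S^{d-3}}\!\cdots \int_{S^{d-3}}\!\cdots$ of pullbacks of volume forms, yielding a nonzero rational multiple of $\mathrm{Vol}(S^{d-2})\,\mathrm{Vol}(S^{d-3})^2$, and the identical computation on $M_2$ gives the analogous nonzero number. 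The last point is a sign check: the orientations of $M_1$ and $M_2$ are fixed by the boundary gluings, which in turn are constrained by the requirement that $\beta=\beta_1+\beta_2$ be a $d^1$-cycle in $E^1_{-5,3(d-1)}$; I would verify that this forces the two principal contributions to appear with the \emph{same} sign, so $\int_{\M_\beta}\omega$ is twice a nonzero number and hence nonzero. This simultaneously shows $[\M_\beta]\neq 0$ in homology and that Longoni's cocycle is nontrivial in cohomology.

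The hard part, as always with configuration space integral pairings, is the bookkeeping: verifying that the hidden and anomalous faces of the Bott--Taubes compactification contribute nothing (so that only the principal face survives on $\M_1$ and $\M_2$, and so that the correction pieces really drop out), and pinning down all the signs — the local orientation signs from the three resolution spheres together with the global sign with which $\M_1$ and $\M_2$ are glued through $\M_3,\dots,\M_6$ — so that the two principal contributions reinforce rather than cancel. The $d=4$ case is genuinely more delicate, since the dimension argument that kills the correction pieces for $d>4$ fails there, and this is exactly why the choreographed isotopies of the Appendix are needed to keep the evaluation of $\omega$ on $\M_\beta$ tractable.
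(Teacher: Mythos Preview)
Your overall strategy---pair $[\M_\beta]$ with Longoni's cocycle $\omega$ and show the integral is nonzero---is exactly what the paper does. But the execution diverges from the paper in ways that introduce a genuine error.

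The paper does \emph{not} decompose the integral along the pieces $\M_i$; it decomposes the configuration space. Writing $\omega=\omega_1+2\omega_2$ for the two graph-summands, one partitions $C_5^{ord}[S^1]$ (for $\omega_2$) into a ``central'' piece $C_5^c=\{\bar s: s_i\in N_i\ \text{for all }i\}$ and pieces $C_5^{(m)}=\{\bar s: s_j\notin N_m\ \text{for all }j\}$. On $C_5^{(m)}\times\M_\beta$ for $m=3,4,5$ the Gauss map factors through a space missing one sphere factor (your dimension idea, but applied to configuration strata rather than to the $\M_i$); for $m=1,2$ one needs a further cobordism argument, replacing the resolution that moves strand $4$ (resp.\ $5$) by one moving strand $1$ (resp.\ $2$). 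Only then does one localize $\alpha$ to a Thom class and reduce the remaining $C_5^c$ integral to a transverse point-count. The $\omega_1$ term is handled by the same partition of $C_4^{ord}[S^1]$ and shown to vanish entirely.

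Your concrete error is the claim that $\M_1$ and $\M_2$ each contribute the same nonzero number, so that the pairing is ``twice a nonzero number.'' In fact only $\M_1$ contributes: after the Thom-class reduction one is counting $(\bar s,\gamma)$ with $\frac{\gamma(t_3)-\gamma(t_1)}{\|\gamma(t_3)-\gamma(t_1)\|}=\frac{\gamma(t_4)-\gamma(t_1)}{\|\gamma(t_4)-\gamma(t_1)\|}=\frac{\gamma(t_5)-\gamma(t_2)}{\|\gamma(t_5)-\gamma(t_2)\|}=(0,\ldots,0,1)$, and for $\gamma\in\M_2$ the first of these can never hold since the triple point of $K_2$ involves $t_2,t_3,t_5$ rather than $t_1,t_3,t_4$. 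The final answer $\omega([\M_\beta])=\pm 2$ comes from the coefficient $2$ in front of $\omega_2$ in the Longoni cocycle, not from adding two principal pieces. Your ``principal/hidden face'' framework and the assertion that the non-trivalent graph ``records a triple point'' and hence vanishes on three-double-point families are both too imprecise to substitute for the configuration-space partition and the cobordism step the paper actually uses.
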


For more details on the case $d=4$, see the Appendix and \cite{Pela12}.   To prove $[\M_\beta]$ is nontrivial, we evaluate a cocycle due to Longoni \cite{Long04} on $[\M_\beta]$ using configuration space integrals.  This is the main result of Section 4.

\section{The Longoni cocycle}

In \cite{CCL02},  Cattaneo, Cotta-Ramusino, and Longoni use configuration space integrals to define a chain map $I$ from a complex of decorated graphs to the de Rham complex of $\Emb({S}^1,\R^d)$.  The starting point is the evaluation map $ev: C_q[S^1]\times \Emb(S^1,\R^d)\rightarrow C_q[\R^d]$, where $C_q[M]$ is the Fulton-MacPherson compactified configuration space.  See \cite{Sinh04} for more details.  For some graphs $G$ (namely those with no internal vertices), the image of the chain map $I$ is defined by pulling back a form determined by $G$ from $C_q[\R^d]$ to $C_q[S^1]\times\Emb(S^1,\R^d)$ and then pushing forward to $\Emb(S^1,\R^d)$.  

To understand the general case, let $\Conf_{q,r}[\R^d]$ be the total space of the pull-back bundle shown below: \[ \xymatrix{\Conf_{q,r}[\R^d] \ar[r]^{\hat{ev}}  \ar[d] & C_{q+r}[\R^d] \ar[d] \\ C_q^{ord}[S^1]\times \Emb({S}^1,\R^d) \ar[r]^{\qquad ev} & C_q[\R^d] \; ,}\] where $C_q^{ord}[S^1]$ is the connected component of $C_q[S^1]$ in which the ordering on the points in the configuration agrees with the ordering induced by the orientation of $S^1$.  Fix an antipodally symmetric volume form on $S^{d-1}$, denoted $\alpha$.  A choice of $\alpha$ determines tautological $(d-1)-$forms on $\Conf_{q,r}[\R^d]$, defined by \[ \theta_{ij} = \hat{ev}^* \phi_{ij}^*(\alpha)\] where $\phi_{ij}: C_q(\R^d)\rightarrow S^{d-1}$ sends a configuration to the unit vector from the $i-$th point to the $j-$th point in the configuration.  We use integration over the fiber of the bundle $\Conf_{q,r}[\R^d] \rightarrow \Emb(S^1,\R^d)$, which is the composite of the projections \[ \Conf_{q,r}[\R^d]\rightarrow C_{q}^{ord}[S^1]\times \Emb({S}^1,\R^d) \rightarrow \Emb({S}^1,\R^d),\] to push forward products of the tautological forms to forms on $\Emb({S}^1,\R^d)$.  Which forms to push forward will be determined by graphs.

Consider connected graphs which satisfy the following conditions.   A \textit{decorated graph (of even type)} is a connected graph consisting of an oriented circle, vertices on the circle (called \textit{external vertices}), vertices which are not on the circle (called \textit{internal vertices}), and edges.  We require that all vertices are at least trivalent.  The decoration consists of an enumeration of the edges and an enumeration of the external vertices that is cyclic with respect to the orientation of the circle.  We will call the portion of the oriented circle between two external vertices an \textit{arc}.

\begin{definition} Let $\D_e$ be the vector space generated by decorated graphs of even type with the following relations.  We set $G = 0$ if there are two edges in $G$ with the same endpoints, or if there is an edge in $G$ whose endpoints are the same internal vertex.  The graphs $G$ and $G'$ are equal if they are isomorphic as graphs and the enumerations of their edges differ by an even permutation.
\end{definition}
The vector space $\D_e$ admits a bigrading as follows.  Let $v_e$ and $v_i$ be the number of external and internal vertices, respectively, and let $e$ be the number edges.  The order of a graph is given by \[\mathrm{ord}\, G = e - v_i\] and the degree of a graph is defined by \[\deg G = 2e - 3v_i -v_e.\]  Let $\mathcal{D}_e^{k,m}$ be the vector space of equivalence classes with order $k$ and degree $m$.  In \cite{CCL02}, Cattaneo, Cotta-Ramusino and Longoni define a map from this vector space to the space of ${(m+(d-3)k)-}$ forms on $\Emb(S^1, \R^d)$.  

\begin{definition} Define $I(\alpha): \mathcal{D}_e^{k,m} \rightarrow \Omega^{m+(d-3)k} \left(\Emb({S}^1,\R^d)\right)$ as follows.  \begin{enumerate}
\item Choose an ordering on the internal vertices.  
\item Associate each edge in $G$ joining vertex $i$ and vertex $j$ to the tautological form $\theta_{ij}$.  
\item Take the product of these tautological forms with the order of multiplication determined by the enumeration of the edges, to define a form on $\Conf_{q,r}[\R^d]$.
\item Integrate this form over the fiber to obtain a form on $\Emb(S^1,\R^d)$. \end{enumerate} 
\end{definition}

This integration over the fiber defines the pushforward and in this case is often called a configuration space integral. There is a coboundary map on $\D_e$ which makes $I(\alpha)$ a cochain map.

\begin{definition} Define a coboundary operator on  $\D_e$ by taking $\delta G$ to be the signed sum of the decorated graphs obtained from $G$ by contracting, one at a time, the arcs of $G$ and the edges of $G$ which have at least one endpoint at an external vertex.  After contracting, the edges and vertices are relabeled in the obvious way - if the edge (respectively vertex) labeled $i$ is removed, we replace the label $j$ by $j-1$ for all $j>i$.  When contracting an arc joining vertex $i$ to $i+1$, the sign is given by $\sigma(i,i+1) = (-1)^{i+1}$, and when contracting the arc joining vertex $j$ to vertex $1$, the sign is given by $\sigma(j,1) = (-1)^{j+1}$.  When contracting the edge $l$, the sign is given by $\sigma(l) = l + 1 + v_e$, where $ v_e$ is the number of external vertices.  
\end{definition}

\begin{theorem} \cite{CCL02}
The map $I(\alpha)$ determines a cochain map and therefore induces a map on cohomology, which we denote $I(\alpha): H^{k,m}(\mathcal{D}_e)\rightarrow H^{m+(d-3)k}(\Emb(\hat{S}^1,\R^d))$.\end{theorem}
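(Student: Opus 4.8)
The plan is to reconstruct the proof of Cattaneo, Cotta-Ramusino and Longoni, whose only genuine analytic ingredient is the generalized Stokes' theorem for integration along the fiber of a bundle whose fiber is a manifold with corners. First I would check that $I(\alpha)$ is a well-defined linear map at all: compactness of the Fulton--MacPherson spaces $C_{q+r}[\R^d]$ makes every configuration space integral in the definition converge, and the relations imposed on $\mathcal{D}_e$ are exactly those needed for the form $\bigwedge_{(i,j)}\theta_{ij}$ to be well defined --- two parallel edges would force the factor $\theta_{ij}\wedge\theta_{ij}$, which is the pullback of $\alpha\wedge\alpha=0$; a self-edge at an internal vertex would force the undefined form $\theta_{ii}$; and reordering the $(d-1)$-forms $\theta_{ij}$ by an even permutation of the edges introduces the Koszul sign $+1$ when $d$ is even (then $d-1$ is odd, so a transposition of two such forms contributes $-1$, matching the sign of a transposition of edges in $\mathcal{D}_e$). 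One likewise checks independence of the auxiliary ordering of the internal vertices. Granting this, the statement about cohomology is purely formal once $I(\alpha)$ is shown to be a cochain map, so that is what I would focus on.

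To prove $d\circ I(\alpha)=\pm\,I(\alpha)\circ\delta$, I would first observe that every tautological form $\theta_{ij}=\hat{ev}^*\phi_{ij}^*(\alpha)$ is closed, since $\alpha$ is a top-degree form on $S^{d-1}$ and pullback commutes with $d$. Hence the product $\bigwedge_{(i,j)}\theta_{ij}$ attached to a decorated graph $G$ is a closed form on $\Conf_{q,r}[\R^d]$, and Stokes' theorem for the fiber integration $\pi_*$ along $\Conf_{q,r}[\R^d]\to\Emb(\hat{S}^1,\R^d)$ gives
\[ d\,I(\alpha)(G)\;=\;d\,\pi_*\Bigl(\bigwedge_{(i,j)}\theta_{ij}\Bigr)\;=\;\pm\,\pi_*^{\partial}\Bigl(\bigwedge_{(i,j)}\theta_{ij}\Bigr), \]
where $\pi_*^{\partial}$ is integration over the codimension-one boundary faces of the compactified fiber. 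It then remains to identify those faces and their contributions.

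The codimension-one faces are indexed by the locus where a subset $A$ of the $q+r$ vertices collides, and I would sort them as follows. \emph{Principal faces} have $|A|=2$. When the colliding pair is two consecutive external vertices, the arc between them shrinks, and the restricted integrand is exactly the one defining $I(\alpha)$ on the arc-contracted graph, because collapsing $x_i=x_{i+1}$ identifies $C_q^{ord}[S^1]$ with $C_{q-1}^{ord}[S^1]$. When the pair is an internal and an external vertex joined by an edge $l$, the form $\theta_l$ restricts to the volume form on the $S^{d-1}$ of collision directions, $\int_{S^{d-1}}\alpha$ normalizes to $1$, and what remains is the integrand of $I(\alpha)$ on the graph with $l$ contracted; these two sub-cases reproduce precisely the terms of $\delta G$. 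When the pair is two internal vertices joined by an edge, one gets $I(\alpha)$ of a graph with a higher-valence internal vertex, and these contributions cancel among themselves by the Jacobi (IHX) relation. When the colliding pair is not joined by an edge of $G$ (and is not two consecutive external vertices), the integrand is degenerate along the $S^{d-1}$ of collision directions and the face contributes nothing. \emph{Hidden faces}, with $|A|\ge 3$, also contribute nothing, by the Kontsevich vanishing lemmas: a dimension count shows the Gauss maps $\phi_{ij}$ restricted to such a face cannot sweep out enough of the product of spheres, while in borderline cases an orientation-reversing involution of the face fixes the antipodally symmetric integrand. Finally the unique \emph{anomalous face}, where all vertices collapse to one point, contributes nothing in this setting --- for dimensional reasons or by the antipodal symmetry of $\alpha$ together with the parity of $d$ --- as verified in \cite{CCL02}.

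With the boundary reduced to the arc- and external-edge-contraction terms, the last step is to match orientations: each surviving face inherits an orientation from the fiber, and one checks that the resulting sign equals the combinatorial sign attached to the corresponding term of $\delta$ --- $\sigma(i,i+1)=(-1)^{i+1}$ and $\sigma(j,1)=(-1)^{j+1}$ for contracted arcs, $\sigma(l)=l+1+v_e$ for a contracted edge $l$. This gives $d\circ I(\alpha)=\pm\,I(\alpha)\circ\delta$, so $I(\alpha)$ is a cochain map and descends to $I(\alpha)\colon H^{k,m}(\mathcal{D}_e)\to H^{m+(d-3)k}(\Emb(\hat{S}^1,\R^d))$. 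I expect the main obstacle to be the vanishing of the hidden and anomalous faces: this is where the hypotheses on $\alpha$ (antipodal symmetry, hence oddness under the antipode when $d$ is even) and on the parity of $d$ are really used, and it needs the Kontsevich-type degree and involution arguments rather than the direct identification that works for the principal faces; keeping track of boundary orientations against the combinatorial signs of $\delta$ is the secondary technical chore.
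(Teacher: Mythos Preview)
The paper does not supply its own proof of this theorem: it is stated with a citation to \cite{CCL02} and immediately used as a black box, so there is nothing in the paper to compare your proposal against. Your sketch is a faithful outline of the Cattaneo--Cotta-Ramusino--Longoni argument itself --- generalized Stokes for fiber integration over the Fulton--MacPherson compactification, followed by the principal/hidden/anomalous sorting of codimension-one faces and Kontsevich-type vanishing for the non-principal ones --- so in that sense you have reproduced the proof the paper is citing rather than one it gives.

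One small point worth tightening: your treatment of the principal face where two \emph{internal} vertices joined by an edge collide invokes the Jacobi (IHX) relation, but $\mathcal{D}_e$ as defined here carries no IHX relation, and the coboundary $\delta$ explicitly contracts only arcs and edges with at least one external endpoint. In the CCL setting those internal--internal collision faces are handled differently (either they are absorbed into the differential in the original reference and the paper's summary is abbreviated, or they vanish by a separate parity/involution argument rather than by IHX). This does not affect the overall strategy, but the mechanism you name for that particular face is not the one at work in this graph complex.
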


At the level of forms, $I(\alpha)$ depends on the choice of antipodally symmetric volume form $\alpha$.   On cohomology, when $d > 4$ this is independent of $\alpha$.  

\begin{example}  From \cite{CCL02}, we have the graph cocycle shown in  Figure \ref{fig:ccl_cocycle}, originally investigated by Bott and Taubes \cite{BoTa94} for $d=3$.  
\begin{figure}[h]
\begin{center}
$$\includegraphics[width=2.25in]{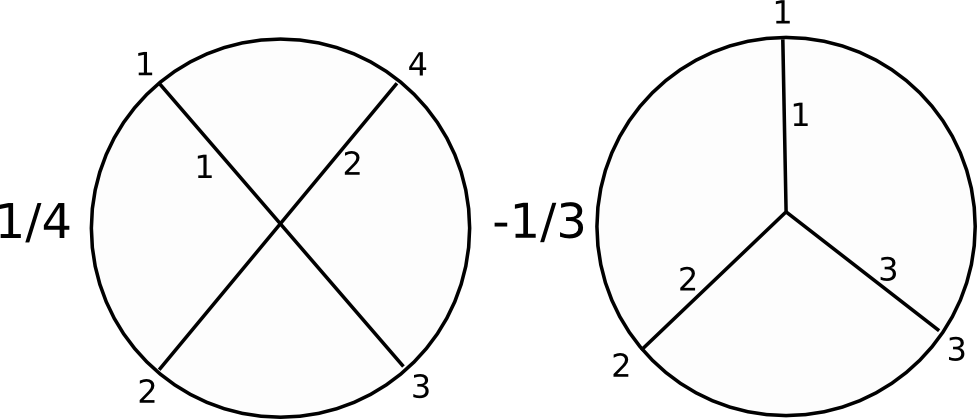}$$
\caption{Graph cocycle given by Cattaneo et al. in \cite{CCL02}.}
\label{fig:ccl_cocycle}
\end{center}
\end{figure}

This induces the cocycle \[ \frac14 \int_{\Conf_{4,0}[\R^d]}\theta_{13}\theta_{24} - \frac13\int_{\Conf_{3,1}[\R^d]}\theta_{14}\theta_{24}\theta_{34} \in H^{2d - 6}\left(\Emb(\hat{S}^1,\R^d)\right).\] 
\end{example}

In \cite{CCL02}, Cattaneo et al. show that this cocycle evaluates non-trivially on $\rho_{[x_1,x_3]\cdot[x_2,x_4]}\left(K\times S^{d-3}\times S^{d-3}\right),$ where $K$ is a singular knot with two double points respecting $[x_1,x_3]\cdot[x_2,x_4]$ (in this case, the cycle does not depend on the ordered subset $S\subseteq \{x_1,x_2,x_3,x_4\}$).  

\begin{example}  In \cite{Long04}, Longoni gives the example shown in Figure \ref{fig:longoni_cocycle} of a graph cocycle $G_L$ in $H^{3,1}(\D_e)$ which uses nontrivalent graphs. There $I(\alpha)\left(G_L\right)\in H^{3(d-3)+1}(\Emb(\hat{S}^1,\R^d))$ is the form \[\omega = \int_{\Conf_{4,1}[\R^d]} \theta_{15}\theta_{45}\theta_{35}\theta_{25} + 2\int_{\Conf_{5,0}[\R^d]} \theta_{13}\theta_{14}\theta_{25}.\]   We pair this cocycle with the cycle $[M_\beta]$ defined in Section ~2 to see that both are nontrivial. 
\begin{figure}[h]
\begin{center}
$$\includegraphics[width=2.5in]{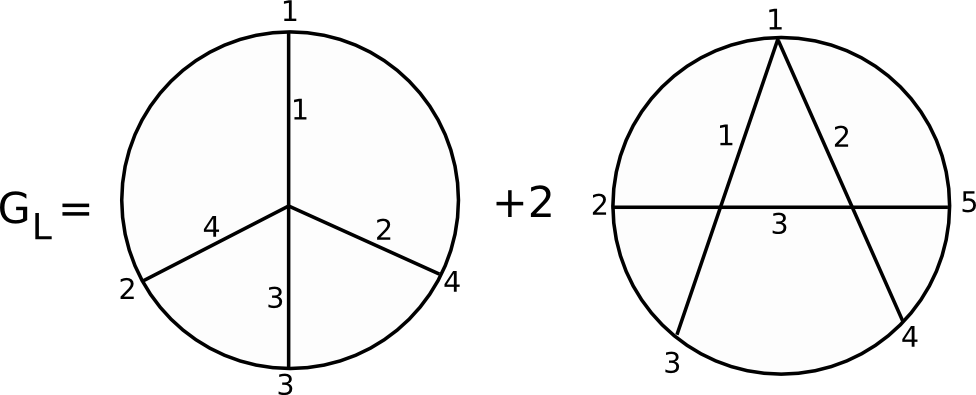}$$
\caption{Graph cocycle given by Longoni in \cite{Long04}.}
\label{fig:longoni_cocycle}
\end{center}
\end{figure}    
\end{example}

\section{Nontriviality}  

\begin{proposition}  Assume $d> 4$ is even.  Let $[\M_\beta]\in H_{3(d-3)+1}(\Emb(\hat{S}^1,\R^d))$ be the cycle defined in Section ~\ref{sec:cycledef}, and let $\omega\in H^{3(d-3)+1}(\Emb(\hat{S}^1,\R^d))$ be the Longoni cocycle defined in the last section.  Then $\omega([\M_\beta]) = \pm 2$.   In particular, $\omega([\M_\beta])$ is nonzero, and therefore both $\omega$ and $[\M_\beta]$ are non-trivial. 
\end{proposition}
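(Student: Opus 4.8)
The plan is to compute the pairing by the configuration space integral technique of \cite{CCL02}, run through for triple points as in \cite{Long04} and \cite{Pela12}. Write $\iota\colon M_\beta\to\Emb(\hat S^1,\R^d)$ for the defining map, so $\omega([\M_\beta])=\int_{M_\beta}\iota^*\omega$. Since pullback commutes with integration over the fiber, each of the two summands of $\omega$ contributes the integral over the total space of the pulled-back configuration bundle $\Conf_{q,r}[\R^d]\to M_\beta$ of a product of tautological forms, and a dimension count shows this integrand is a top form: the total space over $M_1$ has dimension $(3d-8)+q+rd$, which equals $4(d-1)=\deg(\theta_{15}\theta_{45}\theta_{35}\theta_{25})$ for the first summand and $3(d-1)=\deg(\theta_{13}\theta_{14}\theta_{25})$ for the second, and similarly over each $M_i\times\I$. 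First I would split the computation along $\M_\beta=\bigcup_{i=1}^6\M_i$ and over the two summands of $\omega$.

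The technical heart is a vanishing analysis, with two constraints cutting down the configurations that can contribute. First, since $K_1,\dots,K_{10}$ and the isotopies $h_i$ all lie in $\R^4\subset\R^d$ while the resolution maps displace strands only within spheres normal to them, the direction map $\phi_{ij}$ of an edge neither of whose endpoints sits on a moved strand takes values in the fixed $S^3=S^{d-1}\cap\R^4$, and for $d>4$ the $(d-1)$-form $\alpha$ pulls back to zero from $S^3$; hence on any contributing configuration every edge has an endpoint on a moved strand. Second, the integrand is a top form on the total space only if every displacement-sphere coordinate of the piece actually appears --- the factors $v_3\in S^{d-2}$ and $v_4,v_5\in S^{d-3}$ over $M_1$, and the three analogous factors over each $M_i$ --- which forces the graph to have external vertices on all of the moved strands. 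Matching these constraints against the edge pattern of each of the two graphs and the chord/triple-point structure of each singular knot leaves only a short list of admissible configurations, those in which the graph sits on the singular structure of the knot: the internal vertex of the first graph collapses near the triple point of $K_1$ or $K_2$, and the edges of the second graph match the double point and triple point of a knot whose triple point lies on strands $1,3,4$. In particular the second graph is inadmissible on $\M_2$ (whose triple point is on strands $2,3,5$, so strands $1$ and $3$ never meet) and on $\M_3,\dots,\M_6$ (whose three chords are disjoint).

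On each admissible configuration I would compute the surviving integral by localizing. Rescaling the along-strand coordinates of the configuration points to the scale of the displacements, the relevant product of direction maps becomes, in the limit, the pullback of the volume form of a product of copies of $S^{d-1}$ under a map of degree $\pm1$ off a lower-dimensional set; normalizing $\int_{S^{d-1}}\alpha=1$, this integrates to $\pm1$. Collecting the admissible contributions --- those localized at the triple point of $K_1$ (and of $K_2$), weighted by the coefficients $1$ and $2$ appearing in $\omega$ --- together with the orientation and edge-ordering signs from $M_\beta$ and $\D_e$, the total comes out to $\pm2$. Sending the auxiliary constants $\delta,\ep\to0$ removes the excised balls of $M_1$ and the other truncations without changing the homologically invariant answer, and since the localized computation sees only neighborhoods of the singular knots, the value is independent of the chosen isotopies $h_i$; independence of $\alpha$ for $d>4$ was already recorded.

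The step I expect to be the main obstacle is this vanishing and localization analysis: rigorously excluding every configuration outside the admissible list --- in particular the hidden faces of the Fulton-MacPherson compactification, where several configuration points, or the internal vertex, collide among themselves instead of approaching the knot, as well as the admissible-looking configurations where the relevant product of direction maps turns out to be rank-deficient --- and then fixing all of the orientation and edge-ordering signs carefully enough to conclude that the surviving total is $\pm2$ rather than $0$.
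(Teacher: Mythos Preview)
Your overall strategy is the right one and close to the paper's, but the vanishing analysis you identify as the main obstacle really is missing a key idea, and the paper supplies one you do not mention.

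Your two constraints --- that every edge must touch a moved strand (since the unmoved part of each knot in the family sits in a fixed $\R^4\subset\R^d$) and that every resolution sphere must actually be used --- are correct and do most of the work, but they do not fully localize. For $\omega_2$ over $\M_1$ the moved strands are the third, fourth, and fifth; your constraints force each of $N_3,N_4,N_5$ to contain some $s_j$ and each edge to touch one of these, but they do \emph{not} force $s_1\in N_1$ or $s_2\in N_2$. The paper handles exactly these residual pieces (its $C_5^{(1)}$ and $C_5^{(2)}$) by a Stokes/cobordism argument you do not have: one replaces $\M_\beta$, in which strand $4$ is moved off the double point $K_i(t_1)=K_i(t_4)$, by a homotopic family $\M_\beta''$ in which strand $1$ is moved instead, via a two-step cobordism through a family moving both strands. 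On $\M_\beta''$ the relevant map is independent of the new resolution direction when no $s_j\in N_1$, so the integral vanishes by dimension; the boundary terms in Stokes are then shown to vanish by a further cobordism shrinking a strand length to zero inside an appropriate space of immersions. Neither your $\R^4$ constraint nor a rank-deficiency check substitutes for this.

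Your accounting for the two summands is also off. In the paper the tripod graph (your ``first graph'') contributes \emph{zero}: with only four external configuration points but five neighborhoods $N_1,\ldots,N_5$, every configuration misses some $N_m$, and each resulting piece vanishes by the dimension or cobordism argument above. There is no contribution localized at the triple points of $K_1$ or $K_2$ to collect, and no cancellation between them to arrange. The entire pairing comes from the second graph: $\omega_2([\M_\beta])=\pm1$, computed by a Thom-class count on the single piece $C_5^c\times\M_1$, so that $\omega([\M_\beta])=2\omega_2([\M_\beta])=\pm2$. Your localization picture for $\omega_2$ on $\M_1$ versus $\M_2,\ldots,\M_6$ is correct; it is the treatment of the off-diagonal configuration pieces and of $\omega_1$ that needs the additional Stokes input.
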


  In \cite{Tour07}, Turchin calculates that $E^2_{-5,3(d-1)}$ has rank one, so $[\M_\beta]$ is a generator of this group.  The proposition also holds for $d=4$ if $\Emb(\hat{S}^1,\R^d)$ is replaced by $\Emb(S^1,\R^d)$.

\begin{proof}

First we show that $\omega_2([\M_\beta])=\pm 1$.  Let $g: \Conf_{5,0}[\R^d]\rightarrow S^{d-1}\times S^{d-1}\times S^{d-1}$ be the map shown in the diagram below, where $\bar{\psi}=\phi_{13}\times \phi_{14}\times \phi_{25}$.  Then $\omega_2$ is the pushforward along $\pi:\Conf_{5,0}[\R^d]\rightarrow \Emb(\hat{S}^1,\R^d)$ of $g^*(\alpha\otimes\alpha\otimes\alpha)$.  

\[\xymatrix{ &\Conf_{5,0}[\R^d]\ar[r] \ar[d] \ar@/^2pc/[rr]^g\ar@/_6pc/[dd]_\pi & C_5[\R^d] \ar[r]^{\bar{\psi}\qquad \qquad}\ar[d]^{id} & S^{d-1}\times S^{d-1}\times S^{d-1} \\ & C_5^{ord}[S^1]\times \Emb(\hat{S}^1,\R^d)\ar[r]\ar[d]  & C_5[\R^d] \\ \M_\beta \ar@^{(->}[r] & \Emb(\hat{S}^1,\R^d)   }\]

By naturality of pushforwards, $\omega_2([\M_\beta])=g^*(\alpha\otimes\alpha\otimes\alpha)([\pi^{-1}(\M_\beta)])$.  The bundle  $\pi: \Conf_{5,0}[\R^d]\rightarrow \Emb(\hat{S}^1,\R^d)$ is trivial, so $g^*(\alpha\otimes\alpha\otimes\alpha)([\pi^{-1}(\M_\beta)])= \int_{C_5^{ord}[S^1]\times \M_\beta} g^*(\alpha\otimes\alpha\otimes\alpha)$.

To calculate $\int_{C_5^{ord}[S^1]\times \M_\beta} g^*(\alpha\otimes\alpha\otimes\alpha)$, we first partition $C_5^{ord}[S^1]$.  For $i=1,\ldots, 5$ let $N_i=(t_i-\varepsilon, t_i+\varepsilon)$, where the $t_i$ are the times of singularity in $K_1$ and $K_2$, and $\varepsilon$ is as in Section ~\ref{sec:cycledef}.    Define \[ C_5^{(i)} = \left\{ \bar{s}\in C_5^{ord} : s_j\not\in N_i\; \mathrm{for}\; j=1,\ldots, 5 \; \mathrm{and}\; \bar{s}\notin C_5^{(m)} \; \mathrm{for} \;m<i\right\},\]  and $C_5^c = C_5^{ord}[S^1]\backslash\left( \cup_{i=1}^5 C_5^{(i)}\right)$, so $C_5^c$ is the set of all $\bar{s}\in C_5^{ord}[S^1]$ such that $t_i-\varepsilon < s_i< t_i+\varepsilon$ for $i=1,\ldots, 5$.  Then $C_5^{ord}[S^1]$ decomposes as $C_5^{ord}[S^1]= C_5^c\sqcup C_5^{(1)}\sqcup \cdots \sqcup C_5^{(5)}$, and we obtain a corresponding decomposition of $\int_{C_5^{ord}[S^1]\times \M_\beta} g^*(\alpha\otimes\alpha\otimes \alpha)$.  We will show that $\int_{C_5^{(m)}\times \M_\beta}g^*(\alpha\otimes\alpha\otimes\alpha) = 0$ for $m=1,\ldots, 5$, so calculating $\omega_2([\M_\beta])$ reduces to evaluating the integrals \[\int_{C_5^c\times \M_i} g^*(\alpha\otimes\alpha\otimes\alpha).\]

For $m=3,4,5$, we show $\int_{C_5^{(m)}\times \M_\beta}g^*(\alpha\otimes\alpha\otimes\alpha) = 0$ by showing $\int_{C_5^{(m)}\times \M_i}g^*(\alpha\otimes\alpha\otimes\alpha) = 0$ for $i = 1,\ldots,6$.   Recall that manifolds have only trivial forms in degrees above their dimension, so a form pulled back through a smaller dimensional manifold is always zero.  To prove that the integrals $\int_{C_5^{(m)}\times \M_i}g^*(\alpha\otimes\alpha\otimes\alpha) $ are zero, we show that the map $g$ factors through spaces of smaller dimension when restricted to each of the subspaces $C_5^{(m)}\times \M_i$.  

First, consider the case $\int_{C_5^{(3)}\times \M_1} g^*(\alpha\otimes\alpha\otimes\alpha)$.  Recall that  $\M_1$  is $\rho_{\beta_1,S}\left(K_1\times \prod_{k=3}^5 (v_k,a_k,\varepsilon)\right) $.  If $t\notin N_3$ and $\gamma \in \M_1$, the point $\gamma(t)$ does not depend on the value of $v_3$ in the preimage of $\gamma$.    This gives us the following factorization of $g\big|_{C_5^{(3)}\times \M_1}$: 

\[ \xymatrix{C_5^{(3)}\times \M_1 \ar[rr]^g \ar[dr] & & S^{d-1}\times S^{d-1}\times S^{d-1} \\ & C_5^{(3)}\times S^{d-3}\times S^{d-3} \ar[ur] & } \]

Since $\dim(C_5^{(3)}\times S^{d-3}\times S^{d-3} ) = 2d-1$ is less than $\dim(S^{d-1}\times S^{d-1}\times S^{d-1}) = 3d-3$, we have $\int_{C_5^{(3)}\times \M_1}g^*(\alpha\otimes\alpha\otimes\alpha)=0$.   

Similarly, for $m=4$ or $m=5$, the restriction $g\big|_{C_5^{(m)}\times\M_1}$ factors through \[C_5^{(m)} \times \{v_3\in S^{d-2}\,:\, \parallel v_3\pm w_1 \parallel > \delta\; \mathrm{and} \parallel v_3\pm w_4\parallel >\delta\}\times S^{d-3},\] so the corresponding integrals are zero.   This argument also shows that $\int_{C_5^{(m)}\times \M_2} g^*(\alpha\otimes\alpha\otimes\alpha) = 0$ for $m=3,4,5$.   For $i=3,4,5,6$ and $m=3,4, 5$, the restriction $g\big|_{C_5^{(m)}\times \M_i}$ factors through $S^{d-3}\times S^{d-3}\times \I$ and therefore $\int_{C_5^{(m)}\times \M_i}g^*(\alpha\otimes\alpha\otimes\alpha)$ is zero.  
We show $\int_{C_5^{(1)}\times \M_\beta} g^*(\alpha\otimes\alpha\otimes\alpha) = 0$ by replacing $\M_\beta$ with the family of embeddings obtained by moving the first strand (instead of the fourth) off of the double point $K_i(t_1) =K_i(t_4)$, over which $g^*$ factors through a space of lower dimension.  We replace $\M_\beta$ in two steps - first with the family of embeddings in which both strands are moved off the double point, and then by the family in which only the first strand is moved.

Let $\M_\beta'$ be the piecewise smooth subspace of $\Emb(\hat{S^1}, \R^d)$ defined similarly to $\M_\beta$, but by choosing the ordered subset of variables in $\beta_1$ and $\beta_2$ to be $S=\{x_1,x_3,x_4,x_5\}$, and fixing $a_1=a_4$ and $v_1=-v_4$.  In other words, $\M_\beta'$ is obtained from $K_1,\ldots, K_6$ by moving both strands off the double point $K_i(t_1)=K_i(t_4)$ in antipodal directions.  

We define a cobordism $W_1$ between $\M_\beta$ and $\M_\beta'$ as the subspace of $\Emb(\hat{S^1}, \R^d)$ parametrized by $\left(\sqcup_i M_i\right)\times \I$, with the embedding corresponding to the parameter $u\in \I$ determined by $a_1 = u a_4$ (so the $\I$ parametrizes how far the strand with $K_i(t_1)$ is moved off the double point).  

By Stokes' theorem, \[  \int_{C_5^{(1)}\times W_1} dg^*(\alpha\otimes\alpha\otimes\alpha) \\ = \int_{\partial(C_5^{(1)}\times W_1)} g^*(\alpha\otimes\alpha\otimes\alpha).\] Since $dg^*(\alpha\otimes\alpha\otimes\alpha) =g^*d(\alpha\otimes\alpha\otimes\alpha) = 0$, we have  \begin{equation}\label{stokes} 0 = \int_{\partial C_5^{(1)}\times W_1} g^*(\alpha\otimes\alpha\otimes\alpha) +\int_{C_5^{(1)}\times \M_\beta} g^*(\alpha\otimes\alpha\otimes\alpha)-\int_{C_5^{(1)}\times \M_\beta'} g^*(\alpha\otimes\alpha\otimes\alpha).\end{equation}  The restriction $g^*\big|_{\partial C_5^{(1)}\times W_1}$ factors through $\partial C_5^{(1)}\times \left(\sqcup_i \M_i\right)$.  If $\bar{s}\in \partial C_5^{(1)}$ then  the parameter, $u\in \I$ determining how far the first strand is moved does not affect $g(\bar{s}, \gamma)$ for $\gamma\in W_1$.  Thus, $\int_{\partial C_5^{(1)}\times W_1} g^*(\alpha\otimes\alpha\otimes\alpha) = 0$ and \[ \int_{C_5^{(1)}\times \M_\beta} g^*(\alpha\otimes\alpha\otimes\alpha)=\int_{C_5^{(1)}\times \M_\beta'} g^*(\alpha\otimes\alpha\otimes\alpha).\]

Let $\M_\beta''$ be the piecewise smooth subspace of $\Emb(\hat{S^1}, \R^d)$ obtained by choosing the ordered subset of variables in $\beta_1$ and $\beta_2$ to be $S=\{x_1,x_3,x_5\}$.  In other words, $\M_\beta''$ is obtained from $K_1,\ldots, K_6$ by moving only the first strand off the double point $K_i(t_1)=K_i(t_4)$.  Let $W_2\subset \Emb(\hat{S^1}, \R^d)$ be parametrized by $\left(\sqcup_i M_i\right)\times \I$, with the embedding corresponding to the parameter $u\in \I$ given by choosing $a_4'' = u a_4$ (so the interval parametrizes how far the strand with $K_i(t_4)$ is moved off the double point).  Then $W_2$ gives a cobordism between $\M_\beta'$ and $\M_\beta''$, as $\partial W_2 = \M_\beta \sqcup (-\M_\beta')$.  Using Stokes' Theorem and naturality again, we have \begin{equation}\label{stokes2} 0 = \int_{\partial C_5^{(1)}\times W_2} g^*(\alpha\otimes\alpha\otimes\alpha) +\int_{C_5^{(1)}\times \M_\beta'} g^*(\alpha\otimes\alpha\otimes\alpha)-\int_{C_5^{(1)}\times \M_\beta''} g^*(\alpha\otimes\alpha\otimes\alpha).\end{equation}  

The restriction $g^*\big|_{\partial C_5^{(1)}\times W_2}$ does not factor through $\partial C_5^{(1)}\times \left(\sqcup_i M_i\right)$.  To show the first integral in \eqref{stokes2} is zero, we consider $W_2$ as a subspace of $\Imm_{\leq [x_1,x_2], t_1,t_4}(\hat{S}^1,\R^d)$, the subset of $\Imm(\hat{S}^1,\R^d)$ consisting of all immersions $\gamma$ with at most one singularity - a double point $\gamma(t_1)=\gamma(t_4)$.  Since a configuration in $\partial C_5^{(1)}$ does not contain the point $t_1$, the map $g$  is well-defined on $\partial C_5^{(1)}\times \mathrm{Imm}_{\leq [x_1,x_2],t_1,t_4}(\hat{S}^1,\R^d)$.  Letting the dependance on the lengths of the strands be apparent, we now work with $W_2=W_2(\ep_3,\ep_4,\ep_4)$ as a subspace of $\mathrm{Imm}_{\leq [x_1,x_2],t_1,t_2}(\hat{S}^1,\R^d)$.  In this larger space, $W_2(\ep_3,\ep_4,\ep_5)$ is cobordant to $W_2(\ep_3,0,\ep_5)$.  The cobordism is given by $W_3\subset \Imm_{[x_1,x_2],t_1,t_4}(\hat{S}^1,\R^d)$ parametrized by $\left(\sqcup_i M_i\right)\times \I\times \I$ where the second unit interval parametrizes the length of the strand centered at $t_4$ moved by the resolution map.     

By Stokes' Theorem and naturality, \[ 0=\int_{\partial C_5^{(1)} \times W_3} dg^*(\alpha\otimes\alpha\otimes\alpha) = \int_{\partial\left(\partial C_5^{(1)} \times W_3\right)}g^*(\alpha\otimes\alpha\otimes\alpha), \]  and thus, \begin{multline} 0 = \int_{\partial (\partial C_5^{(1)})\times W_3}g^*(\alpha\otimes\alpha\otimes\alpha) + \int_{\partial C_5^{(1)}\times W_2(\ep_3,\ep_4,\ep_5)} g^*(\alpha\otimes\alpha\otimes\alpha) - \int_{\partial C_5^{(1)}\times W_2(\ep_3,0,\ep_5)} g^*(\alpha\otimes\alpha\otimes\alpha)\\ = \int_{\partial C_5^{(1)}\times W_2(\ep_3,\ep_4,\ep_5)} g^*(\alpha\otimes\alpha\otimes\alpha). \end{multline} The second equality holds because $\partial(\partial C_5^{(1)} )= \varnothing$ and the dimension of $W_2(\ep_3,0,\ep_5)$ is $2d-3$. 

By the same argument,  $\int_{C_5^{(5)}\times \M_\beta} g^*(\alpha\otimes\alpha\otimes\alpha)=0$.  Calculating $\int_{C_5\times \M_\beta }g^*(\alpha\otimes\alpha\otimes\alpha)$ thus reduces to calculating $\int_{C_5^c\times \M_i} g^*(\alpha\otimes\alpha\otimes\alpha)$ for $i=1,\ldots, 6$.   

We chose the antipodally symmetric volume form, $\alpha$, to be concentrated near the points $\bar x_1 = (0,\ldots,0,1)\in S^{d-1}$ and $\bar x_2 = (0,\ldots,0,-1)\in S^{d-1}$.  Let $\tau_{\bar x_1}$ and $\tau_{\bar x_2}$ be the Thom classes of these points, as defined in Section 6 of \cite{BoTu82}, so $\alpha = \frac{1}{2}\left(\tau_{\bar x_1} + \tau_{\bar x_2}\right)$.   Let $y$ be the arc in $S^{d-1}$ connecting $(0,\ldots,0,1)$ and $(0,\ldots,0,-1)$, defined as \[y = \left\{ \left(0,\ldots,0,\sqrt{1-s^2},s\right)\in S^{d-1} \; : \; -1\leq s\leq 1\right\}.\]  The Thom class $\tau_y$ of $y$ can be chosen so that $d\tau_y = \tau_{\bar x_1} - \tau_{\bar x_2} = 2(\tau_{\bar x_1} - \alpha)$.  

We have \begin{align*} \int_{C_5^c\times \M_i} g^*(\alpha\otimes\alpha\otimes\alpha-\tau_{\bar x_1}\otimes \alpha \otimes \alpha) &=  \int_{C_5^c\times \M_i} g^*\left( -\tfrac{1}{2} d\tau_y\otimes \alpha \otimes \alpha\right) \\ & = -\tfrac{1}{2} \int_{C_5^c\times \M_i} dg^*(\tau_y\otimes \alpha\otimes \alpha) \\ & =-\tfrac{1}{2}\int_{\partial (C_5^c\times \M_i)} g^*(\tau_y\otimes \alpha\otimes \alpha). \end{align*}

 If $(v_1,v_2,v_3) \in g\left(\partial (C_5^c\times \M_i)\right)$ at least one of the first two coordinates of $v_1$ is non-zero, but every $\bar x\in y\subset S^{d-1}$ has $x_1,x_2 = 0$. Thus, the sets $y$ and  $g\left(\partial (C_5^c\times \M_i)\right)$ are disjoint and $\int_{C_5^c\times \M_i} g^*(\tau_y\otimes\alpha\otimes\alpha)=0$, which means $\int_{C_5^c\times \M_i} g^*(\alpha\otimes\alpha\otimes\alpha) = \int_{C_5^c\times \M_i} g^*(\tau_{\bar x_1}\otimes \alpha \otimes \alpha)$.  By a similar argument, \[\int_{C_5^c\times \M_i} g^*(\alpha\otimes\alpha\otimes\alpha) = \int_{C_5^c\times \M_i} g^*(\tau_{\bar x_1}\otimes \tau_{\bar x_1} \otimes \tau_{\bar x_1}).\]  This integral can be calculated by counting the transverse intersections of $g(C_5^c\times \M_i)$ and $(\bar x_1, \bar x_1, \bar x_1)$ in $S^{d-1}\times S^{d-1}\times S^{d-1}$.  

Recall that \[g(\bar{s},\gamma)=\left( \frac{\gamma(s_3)-\gamma(s_1)}{\|\gamma(s_3)- \gamma(s_1)\|}, \frac{\gamma(s_4)-\gamma(s_1)}{\|\gamma(s_4)- \gamma(s_1)\|}, \frac{\gamma(s_5)-\gamma(s_2)}{\|\gamma(s_5)- \gamma(s_2)\|}\right).\]  Thus, we are counting the number of pairs $(\bar{s},\gamma)\in C_5^c\times \M_i$ for which \[ \frac{\gamma(s_3)-\gamma(s_1)}{\|\gamma(s_3)- \gamma(s_1)\|} = \frac{\gamma(s_4)-\gamma(s_1)}{\|\gamma(s_4)- \gamma(s_1)\|} = \frac{\gamma(s_5)-\gamma(s_2)}{\|\gamma(s_5)- \gamma(s_2)\|} = (0,\ldots,0,1).\]    For $\gamma \in\ M_\beta$, this is only possible if $s_i=t_i$ for $i=1,\ldots,5$.  

If $\gamma \in \M_1$, then  \[ \frac{\gamma(t_3)-\gamma(t_1)}{\|\gamma(t_3)- \gamma(t_1)\|} = \frac{\gamma(t_4)-\gamma(t_1)}{\|\gamma(t_4)- \gamma(t_1)\|} = \frac{\gamma(t_5)-\gamma(t_2)}{\|\gamma(t_5)- \gamma(t_2)\|} = (0,\ldots,0,1)\] exactly when $v_3=v_4=v_5 = (0,\ldots,0,1)$ and so $\int_{C_5^c\times \M_1} g^*(\alpha\otimes\alpha\otimes\alpha) = \pm1$.  If $\gamma \in \M_i$ for $i=2,\ldots, 6$, then \[\frac{\gamma(t_3)-\gamma(t_1)}{\|\gamma(t_3)- \gamma(t_1)\|} \neq (0,\ldots, 0, 1) ,\] and $\int_{C_5^c\times \M_i} g^*(\alpha\otimes\alpha\otimes\alpha) = 0$.  Thus, $\omega_2([\M_\beta]) = \pm 1$.  

Next, we show that $\omega_1([\M_\beta])=0$.  Let $f: \Conf_{4,1}(\R^d)\rightarrow  S^{d-1}\times S^{d-1}\times S^{d-1}\times S^{d-1}$ be the map shown in the diagram below, where $\bar \varphi = \phi_{15}\times \phi_{45}\times \phi_{35}\times \phi_{25}$.  Then $\omega_1$ is the pushforward of $f^*(\alpha\otimes \alpha\otimes \alpha\otimes\alpha)$ along $p: \Conf_{4,1}(\R^d)\rightarrow \Emb(\hat{S}^1,\R^d)$.  

\[\xymatrix{ &\Conf_{4,1}(\R^d)\ar[r] \ar[d]^{p_1} \ar@/^2pc/[rr]^f\ar@/_6pc/[dd]_p&C_5[\R^d] \ar[r]^{\bar{\varphi}\qquad \qquad}\ar[d] & \ssss \\ & C_4^{ord}[S^1]\times \Emb(\hat{S}^1,\R^d)\ar[r]\ar[d]^{p_2}  & C_4[\R^d] \\ M_\beta \ar@^{(->}[r] & \Emb(\hat{S}^1,\R^d)   }\]

Since $p^{-1}(\M_\beta) = p_1^{-1}(C_4^{ord}[S^1]\times \M_\beta)$, we have  $\omega_1([\M_\beta])=\int_{p_1^{-1}(C_4^{ord}[S^1]\times \M_\beta)}f^*(\aaaa)$.   Following the calculation of $\omega_1([\M_\beta])$, define \[ C_4^{(i)} = \left\{ \bar{s}\in C_4^{ord}[S^1] : s_j\not\in N_i\; \mathrm{for}\; j=1,\ldots, 4 \; \mathrm{and}\; \bar{s}\notin C_4^{(m)} \; \mathrm{for} \;m<i\right\}.\] Each configuration in $C_4^{ord}[S^1]$ has four points, so $C_4^{ord}[S^1] = C_4^{(1)}\sqcup\cdots\sqcup C_4^{(5)}$. The arguments used to prove that $\int_{C_5^{(m)} \times \M_\beta} g^*(\aaa) = 0$ also show $\int_{p_1^{-1}(C_4^{(m)}\times \M_\beta)}f^*(\aaaa) = 0$ for $m = 1,\ldots,5$.  

\end{proof}

\section{Future Work}

The resolution map in Definition ~\ref{def:resmap} can be generalized to define a resolution map for knots respecting any bracket expression.  Instead of choosing an ordered subset of the variables, we repeatedly choose the strands to move so as to resolve the singularity data for the brackets which are not contained inside of any other brackets.  

For example, if $(K, \bar{t})$ respects $[[x_1,x_3],[[x_2,x_4],x_5]]$ the point $K(t_1)=K(t_3)$ is first moved away from the point $K(t_2)=K(t_4)=K(t_5)$, turning the original singularity into a double point and a triple point.  The double point is then resolved as before, and the triple point is resolved by first moving the fifth strand off the singularity and then resolving the remaining double point.      

The description in Section ~\ref{sec:cycledef} of the first differential of the embedding calculus homology spectral sequence is given in terms of ``doubling'' the point $x_i$.  In \cite{Pela12} we develop another description of this differential, call it $\tilde{d}_1$, which encodes the singularity data that occurs when a knot respecting a bracket expression is resolved as prescribed in the generalization of the resolution map, but with the directions chosen in such a way as to introduce a new singularity.   The boundary components of the family of resolutions of a knot $(K,\bar{t})$ respecting a bracket expression under the generalized resolution map are the same as the families of resolutions of knots respecting the terms in $\tilde{d}_1$ of that bracket expression (with appropriate choices).

Suppose $\beta = \sum_{i=1}^m \beta_i$ is a cycle on the first page of the spectral sequence (where each $\beta_i$ is a bracket expression with a single term) in which the Jacobi identity is not used to simplify the differential.   Knots $(K_i, \bar{t})$ respecting the $\beta_i$ can be chosen so that the boundaries of the families of resolutions under the generalized resolution map can be connected by families of embeddings given by an isotopy of underlying singular knots, as in the cycle $[\M_\beta]$ defined here.  Thus the process used in this paper can be generalized to more cycles on the first page of the spectral sequence.  Because Turchin proved linear duality of the Cattaneo, Cotta-Ramusino, and Longoni graph complex and the $E_1$ page of the embedding calculus spectral sequence, we also have configuration space integrals to evaluate on the families we produce.  Together these could give not only a second proof of the collapse of the spectral sequence (Lambrechts, Turchin and Volic use closely related configuration space integrals in their proof of the collapse in \cite{LTV06}), but also geometric representatives and a clear starting point for considering any torsion phenomena.

\appendix 

\section{Isotopies}

When $d = 4$ the value of $\omega_2([\M_\beta])$ depends on the isotopies chosen, as $x_1 = (0,0,0,1)$.  We can construct isotopies whose images are in $\R^3$ except for near crossing changes.  This forces the counts used to calculate the integrals $\int_{C_5^c\times \M_i} g^*(\tau_{\bar x_1}\otimes \tau_{\bar x_1} \otimes \tau_{\bar x_1})$ to be the same as in the higher dimensional cases.  We give an example of such an isotopy from $K_3$ to $K_9$ below, by specifying steps the isotopy must satisfy.  All four isotopies will appear in \cite{Pela12}.

By a slide isotopy we will mean an isotopy through singular knots in which a singular point is moved along one of the strands through the singularity while the other strand moves along with the singular point.  By a planar isotopy we will mean an isotopy which can be represented by an isotopy of knot diagrams.  Isotopies corresponding to the Reidemeister moves in classical knot theory generalize to singular knots in $\R^d$.  In addition to the usual Reidemeister I and II moves, we use Reidemeister III moves to move a strand past a crossing (as in classical theory) or past a singularity, as shown in Figure \ref{fig:rIII}.   

\begin{figure}[h]
\begin{center}
$$\includegraphics[width=2in]{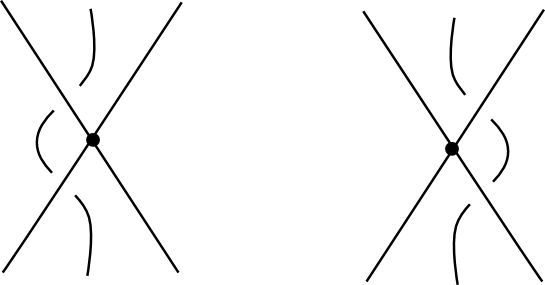}$$
\caption{Reidemeister III move for singular knots.}
\label{fig:rIII}
\end{center}
\end{figure}

By a ``rotate the disk isotopy," we mean an isotopy in which the disk centered at a singularity is rotated by $180^\circ$ about the axis perpendicular to a particular great circle.  Specifically, we take two distinct nested disks centered at the singular point with radii small enough that the intersection of the knot with the disks is the two strands intersecting at the singular point.  The smaller of the two disks is rotated by $180^\circ$ without changing anything inside of this disk.  The strands inside of the larger disk but outside of the smaller disk are stretched through a planar isotopy.  This isotopy is shown in Figure ~\ref{fig:rotate_isotopy} from the perspective of the north pole of the larger disk.  The knot remains unchanged outside of the larger disk.

\begin{figure}[h]
\begin{center}
$$\includegraphics[width=2.5in]{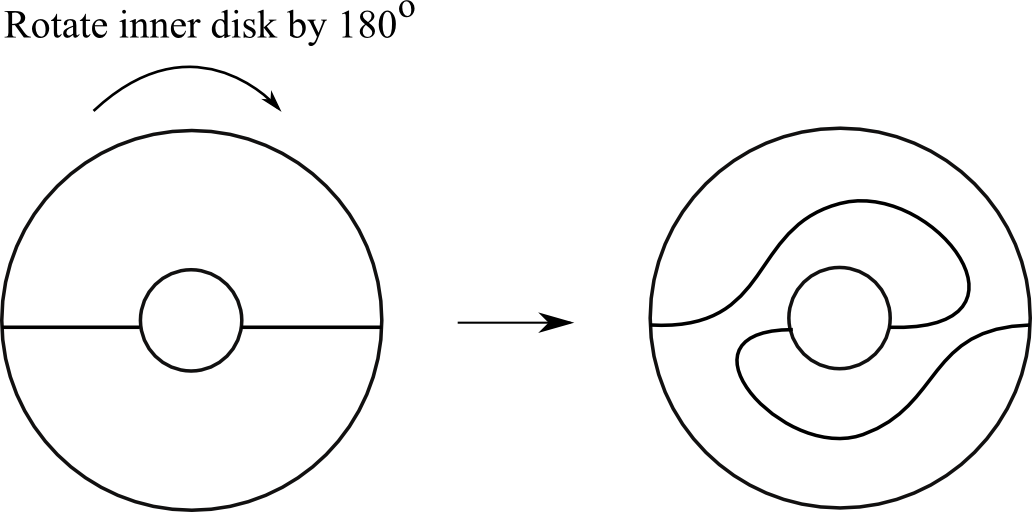}$$
\caption{View of a rotate the disk isotopy from the north pole.}
\label{fig:rotate_isotopy}
\end{center}
\end{figure}

A suitable type of isotopy from $K_3$ to $K_{9}$ is shown in Figure \ref{fig:4isotopic10}, and the steps are given below.  Each step occurs in $\R^3\subset \R^4$ except (4), (6) and (10), in which one strand of the knot briefly moves into $\R^4$. \begin{enumerate}
\item Simplify the shape of the strand from $b_1$ to $c_1$ and perform a Reidemeister II move on the strand from $a_1$ to $b_1$ to eliminate crossings.
\item Move the points $a_1$, $b_1$ and $c_1$ to $a_2$, $b_2$ and $c_2$ through a planar isotopy.
\item Rotate the disk centered at  $c_2$ by $180^\circ$ about the axis perpendicular to the great circle shown.
\item The crossing is changed, briefly moving the strand from $b_2$ to $c_2$ in the direction of the fourth standard basis vector. 
\item Perform a sequence of Redemeister I,II and III moves on the strand from $b_2$ to $c_2$.
\item The crossing is changed, briefly moving the strand from $c_2$ to $a_2$ in the direction of the fourth standard basis vector.  
\item Perform a sequence of Reidemeister I, II and III moves on the strand from $c_2$ to $a_2$.  
\item Rotate the disk centered at  $a_2$ by $180^\circ$ about the axis perpendicular to the great circle shown.
\item Perform a sequence of Reidemeister I, II and III moves on the strand from $c_2$ to $a_2$ and the strand from $a_2$ to $b_2$.  
\item The crossing is changed, briefly moving the strand from $a_2$ to $b_2$ in the direction of the fourth standard basis vector. 
\item Perform a sequence of Reidemeister I, II and III moves on the strand from $a_2$ to $b_2$. 
\item  Through a planar isotopy, the points $a_2$, $b_2$ and $c_2$ are moved to the positions of the double points of $K_{9}$, denoted $a_3$, $b_3$ and $c_3$ and the strands are moved to give the knot the same shape as $K_{9}$.
\end{enumerate}

\begin{figure}[h]
\begin{center}
$$\includegraphics[width=4.75in]{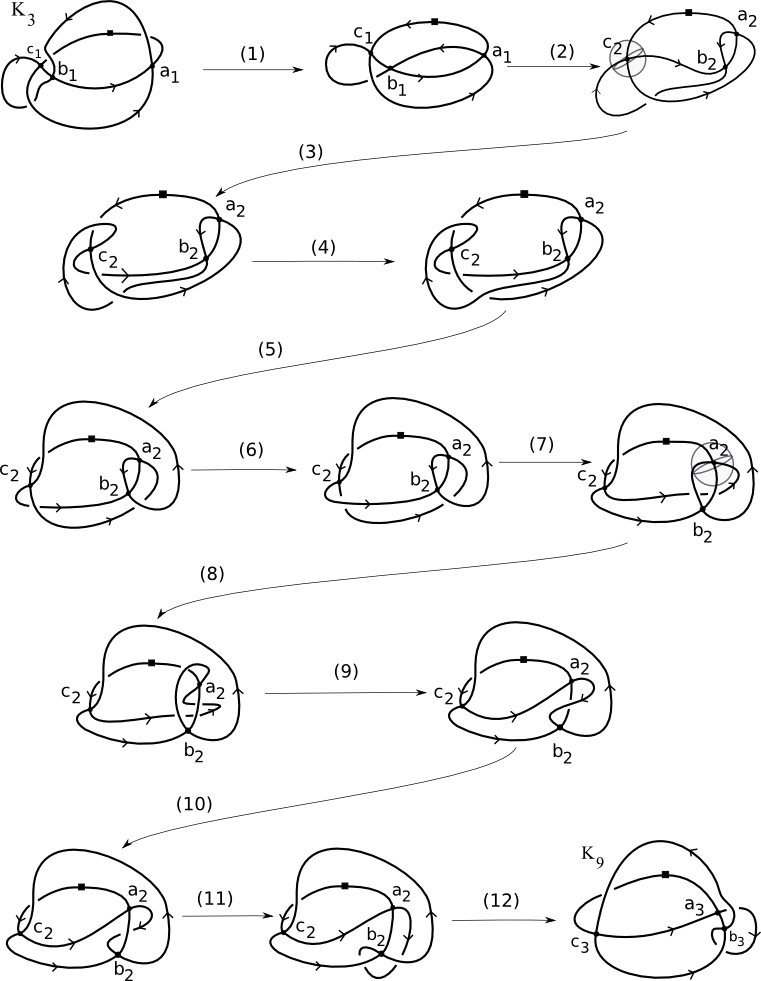}$$
\caption{Isotopy from $K_3$ to $K_{9}$.}
\label{fig:4isotopic10}
\end{center}
\end{figure}

\bibliographystyle{plain}
\bibliography{myrefs}  
\end{document}